\tikzstyle{mybox} = [draw=black, very thick, rectangle, rounded corners, inner ysep=5pt, inner xsep=5pt]
\theoremstyle{definition}
\newtheorem{remark}{Remark}[section]
\newtheorem{esempio}{Example}[section]
\newtheorem{notazione}{Notation}[section]
\newtheorem{definizione}{Definition}[section]
\newtheorem{fatto}{Fact}[section]
\newtheorem{proprieta}{Property}[section]
\theoremstyle{plain}
\newtheorem{teorema}{Theorem}[section]
\newtheorem{proposizione}{Proposition}[section]
\newtheorem{lemma}{Lemma}[section]
\newtheorem{corollario}{Corollary}[section]
\newcommand{\numberset}{\mathbb}
\newcommand{\R}{\numberset{R}}
\newcommand{\N}{\numberset{N}}
\newcommand{\Z}{\numberset{Z}}
\newcommand{\A}{\mathbb{A}}
\newcommand{\T}{\mathbb{T}}
\DeclarePairedDelimiter{\abs}{\lvert}{\rvert}
\DeclarePairedDelimiter{\norma}{\lVert}{\rVert}
\let\oldabs\abs
\def\abs{\@ifstar{\oldabs}{\oldabs*}}
\let\oldnorma\norma
\def\norma{\@ifstar{\oldnorma}{\oldnorma*}}
\title{On the set of points of zero torsion for negative-torsion maps of the annulus}
\author{\hspace{-1cm}{\qquad\ \  Anna Florio${\,}^{1}$}\vspace{0.3cm}
	\\
	\hspace{-1cm}${\ }^{1}$ Fondation Sciences Mathématiques de Paris, CNRS ,\\
	\hspace{-1cm}IMJ-PRG, 4 Place Jussieu, F-75005, Paris, France\\}
\date{\today}
\begin{document}

\selectlanguage{english}
\maketitle
\begin{abstract}
	\noindent For negative-torsion maps on the annulus we show that on every $\mathcal{C}^1$ essential curve there is at least one point of zero torsion. As an outcome we deduce that the Hausdorff dimension of the set of points of zero torsion is greater or equal 1. As a byproduct we obtain a Birkhoff's-theorem-like result for $\mathcal{C}^1$ essential curves in the framework of negative-torsion maps.
\end{abstract}
\section{Introduction}
\indent Let $\A=\T\times\R$ where $\T=\R/\Z$. Endow $\A$ with the standard Riemannian metric and the standard trivialization. Let $f:\A\rightarrow\A$ be a $\mathcal{C}^1$ diffeomorphism isotopic to the identity. Let $(f_t)_{t\in[0,1]}$ be an isotopy in $\text{Diff }^1(\A)$ joining the identity to $f_1=f$. We are interested in the linearized dynamics: more precisely we look at the \textit{torsion}, a dynamical invariant first introduced by D.~Ruelle in 1985 (see \cite{ruelle}), who called it \emph{rotation number}.\\
\noindent Roughly speaking, the torsion of the orbit of a point $x$, denoted as $\text{Torsion}(f,x)$, describes the average asymptotic velocity at which the differential of the diffeomorphism makes the tangent vectors turning along the considered orbit. The torsion at finite time $T$ of a point $(x,\xi)$ of the tangent bundle is the variation between $0$ and $T$ of a continuous determination of the angle function associated to $Df_t(x)\xi,t\in[0,T]$, divided by $T$. The torsion of the orbit of $x$ is the limit for $T\rightarrow+\infty$ of the torsion at finite time $T$, whenever the limit exists. If $\mu$ is a compact-supported $f$-invariant Borel probability measure , then the torsion of the orbit of $x$ exists for $\mu$-almost every point $x\in\A$, see \cite{ruelle}.\\
\noindent Besides Ruelle's work, in the framework of conservative twist maps, the structure of some null torsion sets, called Aubry-Mather sets, has been studied by Mather (in \cite{matherexistence} and \cite{mathervariational}) and Angenent (in \cite{Ang}) through a variational approach. Using topological tools, Crovisier has generalised some results in the non conservative twist framework. He has shown that for every rotation number there exists an Aubry-Mather sets of zero torsion, see Theorem 1.2 in \cite{crovisier}.\\
\indent A negative-torsion map is a $\mathcal{C}^1$ diffeomorphism isotopic to the identity such that at every point the vertical vector has a negative average rotational velocity: that is, for every point $x\in\A$ the torsion at finite time $1$ at $x$ with respect to the vertical vector is negative. Positive twist maps and Tonelli Hamiltonian flows at finite time are examples of negative-torsion maps. The notion of negative-torsion map is equivalent to the notion of positive tilt maps as presented in \cite{Hu} and \cite{GidRob}.\\
\noindent In this paper we consider the set of points of zero torsion for negative-torsion maps. The main result of the work is the following.
\begin{teorema}\label{teo 1}
	Let $f:\A\rightarrow\A$ be a negative-torsion map. Then for any $\mathcal{C}^1$ essential curve $\gamma:\T\rightarrow\A$ there exists a point $z\in\gamma(\T)$ such that $\text{Torsion}(f,z)=0$.
\end{teorema}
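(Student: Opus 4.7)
I would work in the universal cover $\R^2$ of $\A$ and track the total angle of rotation of the tangent vector to $\gamma$ under iterates of $f$. Lift $\gamma$ to $\tilde\gamma:\R\to\R^2$ with $\tilde\gamma(s+1)=\tilde\gamma(s)+(1,0)$, and lift the isotopy $(f_t)_{t\in[0,1]}$ to $(\tilde f_t)$, extending to $t\in[0,\infty)$ by iterating the time-$1$ map. For each $(s,t)$, choose a jointly continuous determination $\Theta(s,t)$ of $\arg\bigl(D\tilde f_t(\tilde\gamma(s))\,\tilde\gamma'(s)\bigr)$, and set $\Psi_n(s):=\Theta(s,n)-\Theta(s,0)$. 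Because $\tilde f_t$ commutes with the horizontal deck transformation and $\tilde\gamma'$ is $1$-periodic, $\Psi_n$ is $1$-periodic in $s$ and descends to a continuous function $\Psi_n:\T\to\R$ satisfying $\Psi_n(s)=n\,\tau_n(\gamma(s),\gamma'(s))$, where $\tau_n$ denotes the finite-time torsion.

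The heart of the proof is to show that for every $n$ the continuous function $\Psi_n$ has a zero on $\T$. I would establish
\begin{equation*}
\min_{\T}\Psi_n\leq 0\leq\max_{\T}\Psi_n
\end{equation*}
and conclude by the intermediate value theorem. For the first inequality, I would use the negative-torsion hypothesis together with the standard bound $|\tau_n(z,\xi)-\tau_n(z,\eta)|=O(1/n)$ for finite-time torsions at the same base point with respect to different initial vectors: this should let one locate a point of $\gamma$ where $\Psi_n$ inherits the non-positive contribution coming from the vertical direction. For the second inequality, I would apply the symmetric argument to the inverse map $f^{-1}$, which is a positive-torsion map, on the essential curve $f^n(\gamma)$, and then use the cocycle identity relating the two finite-time torsions to transfer the conclusion back to $\Psi_n$ on $\gamma$. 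This produces, for every $n$, a point $s_n\in\T$ with $\Psi_n(s_n)=0$, i.e.\ $\tau_n(\gamma(s_n),\gamma'(\gamma(s_n)))=0$.

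Finally, I would extract a convergent subsequence $s_{n_k}\to s^\ast\in\T$ and argue that $\mathrm{Torsion}(f,\gamma(s^\ast))=0$. This last step is the main obstacle: the continuity of $\Psi_n$ in $s$ is not uniform in $n$, so one cannot deduce $\Psi_n(s^\ast)=o(n)$ directly from $\Psi_{n_k}(s_{n_k})=0$ together with $s_{n_k}\to s^\ast$. I would close the argument through a Krylov--Bogolyubov construction, producing along the subsequence an $f$-invariant probability measure supported on the orbit closure of $\gamma(s^\ast)$ whose averaged torsion vanishes, and then combine this with Ruelle's existence result for the torsion on a full-measure set, together with an ergodic decomposition, to extract a zero-torsion orbit meeting $\gamma$. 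Ensuring that this orbit passes through the specific point $\gamma(s^\ast)$, rather than merely through its orbit closure, is the most delicate part; this is where I expect the negative-torsion structure to intervene decisively, perhaps via a monotonicity or trapping argument forcing the finite-time zeros $s_{n_k}$ to accumulate precisely at an asymptotic zero of the torsion.
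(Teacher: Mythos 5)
The high-level plan---find for each $n$ a point $s_n\in\T$ where $n\,\text{Torsion}_n(f,\gamma(s_n),\gamma'(s_n))$ is controlled, then pass to a subsequential limit $s^\ast$---matches the paper's, but both of your key steps have genuine gaps.

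First, the intermediate-value claim that $\Psi_n$ changes sign on $\T$ is not established, and it does not follow from the ingredients you invoke. The negative-torsion hypothesis together with Property~\ref{indep vector} gives
\[
\Psi_n(s)=n\,\text{Torsion}_n(f,\gamma(s),\gamma'(s))<n\,\text{Torsion}_n(f,\gamma(s),\chi)+\tfrac12<\tfrac12
\]
for \emph{every} $s$, which is only a uniform upper bound and says nothing about $\min_{\T}\Psi_n\le 0$. Your idea of ``locating a point where $\Psi_n$ inherits the non-positive contribution from the vertical direction'' would need $\gamma$ to have a vertical tangent, which fails for any essential curve that is a graph. The symmetric argument with $f^{-1}$ on $f^n\circ\gamma$ plus the cocycle identity hits the same wall: since $f^{-1}$ is positive-torsion one only gets a uniform \emph{lower} bound $>-\tfrac12$ on the $f^{-1}$-torsions with respect to the tangent direction, not a point where they are $\le 0$. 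Exact zeros of $\Psi_n$ are also more than you need. The paper instead takes $s_n$ to be a preimage on $\gamma$ of a point of \emph{maximal height} of $f^n\circ\gamma(\T)$; there the pushed-forward tangent is horizontal, and the turning-tangent argument (Proposition~\ref{proposizione variazione curva punti altezza max} and Lemmas~\ref{Phi non dipende da sn per sn curva zero torsion}--\ref{Phi e costante nulla}) yields $\Psi_n(s_n)=-Var_\gamma(\gamma(s_0),\gamma(s_n))$, which is bounded by the complexity $C(\gamma)$ uniformly in $n$.

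The second gap is the one you flag yourself, and it is fatal to the Krylov--Bogolyubov route as stated: the limit invariant measure is supported on the orbit closure of $\gamma(s_{n_k})$ under $f$, which need not intersect $\gamma(\T)$; Ruelle's theorem then gives a zero-torsion point $\mu$-a.e.\ on that support, not on $\gamma$. The ``monotonicity or trapping argument'' you conjecture is exactly what the paper uses, and it removes the need for any ergodic theory. The key is Proposition~\ref{prop bounded before}: for a negative-torsion map, if $\abs{n\,\text{Torsion}_n(f,z,\chi)}\le C$ for some $n$, then $m\,\text{Torsion}_m(f,z,\chi)\in[-K/2,0)$ for \emph{every} $m\in\llbracket 1,n\rrbracket$, with $K$ depending only on $C$; this follows from the quasi-additivity Property~\ref{lemma techinque}. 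With this in hand, fix any $N$, pick $\bar n>N$ in your subsequence with $s_{\bar n}$ close to $s^\ast$; then $N\,\text{Torsion}_N(f,\gamma(s_{\bar n}),\chi)\in[-K/2,0)$, and since continuity of $N\,\text{Torsion}_N(f,\cdot,\chi)$ at the \emph{fixed} time $N$ is available (this is how the non-uniformity in $n$ you worried about is sidestepped), the bound transfers to $s^\ast$. Letting $N\to\infty$ gives $\text{Torsion}(f,\gamma(s^\ast))=0$.
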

\noindent Remark that we do not ask any conservative hypothesis on $f$. Applying Theorem \ref{teo 1} to every simple circle curves, i.e. any curve $\T\times\{r\}$ for $r\in\R$, we can deduce the following
\begin{corollario}\label{cor 1}
	Let $f:\A\rightarrow\A$ be a negative-torsion map. Then
	$$
	\text{dim}_H\left( \{ z\in\A :\ \text{Torsion}(f,z)=0 \} \right)\geq 1,
	$$
	where $\text{dim}_H(\cdot)$ denotes the Hausdorff dimension of the set.
\end{corollario}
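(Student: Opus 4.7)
The plan is to reduce the corollary to Theorem \ref{teo 1} by foliating the annulus with horizontal circles and then using a standard Lipschitz-projection lower bound for Hausdorff dimension.

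First I would observe that for every $r\in\R$, the horizontal circle $\gamma_r:\T\rightarrow\A$ defined by $\gamma_r(\theta)=(\theta,r)$ is a $\mathcal{C}^1$ essential curve in $\A$. Applying Theorem \ref{teo 1} to $\gamma_r$ produces a point $z_r\in\T\times\{r\}$ with $\text{Torsion}(f,z_r)=0$. Letting $Z:=\{z\in\A:\text{Torsion}(f,z)=0\}$, this means that $Z$ meets every horizontal circle, so if $\pi_2:\A\rightarrow\R$ denotes the projection onto the vertical factor, we have $\pi_2(Z)=\R$.

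Next I would invoke the standard fact that Hausdorff dimension does not increase under Lipschitz maps. Since $\pi_2$ is $1$-Lipschitz with respect to the standard Riemannian metric on $\A$, it follows that
\[
\dim_H(Z)\ \geq\ \dim_H\bigl(\pi_2(Z)\bigr)\ =\ \dim_H(\R)\ =\ 1,
\]
which is exactly the claim of Corollary \ref{cor 1}.

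There is essentially no obstacle here once Theorem \ref{teo 1} is granted: the whole content is that horizontal circles are $\mathcal{C}^1$ essential, so the theorem applies to each of them, and then the Lipschitz projection argument is routine. The only thing worth double-checking is that the notion of torsion is intrinsic and does not depend on the chosen curve through $z_r$ (it is defined pointwise on $\A$), so that the points $z_r$ really do belong to the single set $Z$ independent of $r$.
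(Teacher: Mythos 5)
Your proof is correct and follows essentially the same route as the paper: apply Theorem \ref{teo 1} to each horizontal circle $\T\times\{r\}$ to see that the zero-torsion set projects onto all of $\R$ under the second-coordinate projection, then use that Hausdorff dimension cannot increase under a Lipschitz map. The paper's own proof is identical in structure and content.
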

\noindent The idea of the proof of Theorem \ref{teo 1} is considering, for every $N\in\N^*$, the preimage on $\gamma$ of points of maximal height of $f^N\circ\gamma(\T)$. We then show that the angle variation of the vector tangent to $\gamma$ at these points between 0 and $N$ is bounded uniformly in $N$. By the negative-torsion property, we control the torsion at finite time $m$ of these points for every $m\in\llbracket 1,n\rrbracket$. Consider then the sequence of preimages on $\gamma$ of such points of maximal height. Every limit point of the sequence is a point of zero torsion.\\
\noindent The idea of looking at points of maximal height can be adapted to extend the result to $\mathcal{C}^0$ essential curves which are graphs of functions (see Theorem \ref{teo c0 graph}).\\
~\newline
\indent As an outcome of Theorem \ref{teo 1}, we deduce that the torsion of an orbit of a point belonging to a $\mathcal{C}^1$ $f$-invariant essential curve can be calculated through the angle variation of the vector tangent to the curve along the curve itself. We obtain so as a by-product of the proof of Theorem \ref{teo 1} a version of Birkhoff's theorem for negative-torsion maps.
\begin{teorema}\label{teo 2}
	Let $f:\A\rightarrow\A$ be a negative-torsion map. Let $\gamma:\T\rightarrow\A$ be a $\mathcal{C}^1$ $f$-invariant essential curve such that $f_{\vert \gamma}$ is non wandering. Then $\gamma$ is the graph of a $\mathcal{C}^1$ function.
\end{teorema}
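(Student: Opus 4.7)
The approach is to apply the strategy of Theorem~\ref{teo 1} to $\gamma$ itself, convert the pointwise zero-torsion conclusion into a rigid cocycle identity via the non-wandering hypothesis, and then exploit the negative-torsion property to exclude vertical tangent vectors.

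Since $\gamma$ is $\mathcal{C}^1$, $f$-invariant and essential, there exists a unique orientation-preserving homeomorphism $h:\T\to\T$ with $f\circ\gamma=\gamma\circ h$. Differentiating this identity shows that the tangent direction of $\gamma$ is $Df$-invariant along $\gamma$: $Df(\gamma(\theta))\gamma'(\theta)=c(\theta)\gamma'(h(\theta))$ with $c(\theta)>0$. Consequently the continuous one-step angle variation $D(\theta):=d(\gamma(\theta),\gamma'(\theta))$ is a well-defined continuous function on $\T$, and the torsion of $\gamma(\theta)$ computed along the tangent vector has the Birkhoff form
$$
\mathrm{Torsion}(f,\gamma(\theta),\gamma'(\theta))=\lim_{n\to\infty}\frac{1}{n}\sum_{k=0}^{n-1}D(h^k(\theta)).
$$

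I would then re-examine the argument of Theorem~\ref{teo 1} in this $f$-invariant setting. Because $\gamma$ is $f$-invariant, the set of points of maximal height of $f^N(\gamma(\T))$ coincides with that of $\gamma(\T)$, so the sequence of preimages $p_N$ built in the proof of Theorem~\ref{teo 1} is the backward orbit under $f$ of a fixed maximum-height point $q\in\gamma(\T)$. The uniform-in-$N$ bound on the angle variation of $\gamma'$ from $p_N$ to $q$ rewrites as a uniform-in-$N$ bound on the partial sums $\sum_{k=-N}^{-1}D(h^k(\theta_q))$. Under the non-wandering hypothesis on $f|_\gamma$, $h$ is either minimal (irrational rotation number) or has every orbit periodic with a common period (rational rotation number). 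In the minimal case, Gottschalk--Hedlund turns the bounded Birkhoff sums into a coboundary identity $D=g\circ h-g$ for some continuous $g:\T\to\R$; in the periodic case the same identity follows from summing along one periodic orbit. Consequently the winding of $\gamma'$ in $\mathbb{RP}^1$ vanishes, and the oriented angle of $\gamma'$ lifts to a continuous $\tilde\beta:\T\to\R$ satisfying $\tilde\beta\circ h-\tilde\beta=D$.

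Finally I would rule out vertical tangent vectors. If $\gamma'(\theta^*)$ were vertical, the negative-torsion hypothesis applied at $\gamma(\theta^*)$ would give $D(\theta^*)<0$, while the analogous positive-torsion statement for $f^{-1}$ applied at the same point would give $\tilde\beta(h^{-1}(\theta^*))>\tilde\beta(\theta^*)$. In particular $\tilde\beta$ strictly decreases across $\theta^*$ along the dynamics; combined with the fact that $\tilde\beta$ attains its extrema on the compact $\T$, no extremum can be vertical, and iterating the cocycle argument produces the pairwise disjointness $V\cap h(V)=V\cap h^2(V)=\emptyset$ where $V:=\{\theta:\gamma'(\theta)\text{ is vertical}\}$. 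A further dynamical argument using non-wandering of $h$ then forces $V=\emptyset$, so $\gamma$ has no vertical tangent and, being essential, is the graph of a $\mathcal{C}^1$ function.

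\textbf{Main obstacle.} The technical core is the middle step, i.e.\ upgrading the asymptotic zero-torsion supplied by Theorem~\ref{teo 1} to a bounded-Birkhoff-sum statement and then, via Gottschalk--Hedlund, to the coboundary identity $D=g\circ h-g$. Once this cohomological rigidity is in place, excluding vertical tangents is a comparatively direct consequence of the negative-torsion property combined with the invariance of $\gamma$.
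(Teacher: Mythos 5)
Your middle step is essentially a reinterpretation of what the paper proves as Lemma~\ref{lemma tors=var}: the identity $N\,\mathrm{Torsion}_N(f,\gamma(s),\gamma'(s))=Var_\gamma(\gamma(s),\gamma(s_N))$ is, once the winding of $\gamma'$ vanishes, exactly the coboundary identity $D=\tilde\beta\circ h-\tilde\beta$. The paper obtains it not via Gottschalk--Hedlund but directly: it writes $\mathrm{Torsion}_1(f,\gamma(s),\gamma'(s))=Var_\gamma(\gamma(s),\gamma(s_1))+k$ and uses Theorem~\ref{teo 1} (a zero-torsion point on $\gamma$) together with boundedness of $Var_\gamma$ to force $k=0$. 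Your route, deriving bounded Birkhoff sums from the $f$-invariant rereading of Proposition~\ref{prop building sequence} and then invoking Gottschalk--Hedlund (plus the separate periodic case), is plausible and genuinely different, but it is heavier than needed and, in the semi-conjugate/Denjoy-free case, already presupposes a dichotomy one has to justify from the non-wandering hypothesis.

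The real gap is in your final step. The coboundary only tells you that the Birkhoff sums $\sum_{k=0}^{N-1}D(h^k\theta)=\tilde\beta(h^N\theta)-\tilde\beta(\theta)$ are bounded; it does not give a \emph{sign-definite lower bound} on $|{\sum D}|$ along an orbit segment starting at a vertical point. Your chain ``no extremum can be vertical $\Rightarrow$ $V\cap h(V)=V\cap h^2(V)=\emptyset$ $\Rightarrow$ a further dynamical argument forces $V=\emptyset$'' does not close: from $D(\theta^*)<0$ you cannot conclude $\tilde\beta(h^{-1}\theta^*)>\tilde\beta(\theta^*)$ unless $h^{-1}\theta^*\in V$ too, and even granting disjointness of $V$ from its first two images, nothing forces $V=\emptyset$. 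You also invoke, without proof, that $f^{-1}$ is a positive-torsion map; this is not stated in the paper, is not immediate from the definition, and is not needed in the paper's argument.

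What is missing is precisely Lemma~\ref{lemma bound torsion}: for a negative-torsion map restricted to a compact invariant set, vectors lying in a small cone around the vertical satisfy $N\,\mathrm{Torsion}_N(f,x,v)<-\varepsilon/2$ \emph{uniformly in $N$} (not just $N=1$). This irreversibility is proved by induction using the order-preservation of the angle flow (Property~\ref{order vector angle}), and it is one-sided — it comes from the negative-torsion hypothesis, not from any cohomological rigidity. With it, the proof is short: take $\tau$ in a small neighborhood $U$ of a vertical parameter $s$ and $N$ with $h^N\tau\in U$ (non-wandering), so $|Var_\gamma(\gamma(\tau),\gamma(h^N\tau))|<2\delta<\varepsilon/2$ because both tangents are near vertical; Lemma~\ref{lemma tors=var} then identifies this with $N\,\mathrm{Torsion}_N(f,\gamma(\tau),\gamma'(\tau))$, contradicting the $<-\varepsilon/2$ bound. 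Your coboundary viewpoint cannot substitute for Lemma~\ref{lemma bound torsion}, because Gottschalk--Hedlund produces boundedness whereas the contradiction requires a uniform \emph{negative} bound.
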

\indent The paper is organised as follows. In Section \ref{sect 2} we fix the notation and we provide the main definitions of torsion and negative-torsion maps. Useful properties of torsion are recalled. Section \ref{sect 3} is devoted to the discussion of the proof of Theorem \ref{teo 1}. Its demonstration relies on two main propositions, see Propositions \ref{prop building sequence} and \ref{prop bounded before}. In particular, Proposition \ref{prop torsion} is discussed first in a simpler case in Subsection \ref{subsec 1} and then in the general framework in Subsection \ref{subsection 2}. Subsection \ref{subsection3} concerns the partial generalisation of Theorem \ref{teo 1} to continuous essential curves. Finally, we present the Birkhoff's-theorem-like result (see Theorem \ref{teo 2}) for negative-torsion maps in Section \ref{Birkhoff result}.\\
~\newline

\noindent \textbf{Acknowledgements.} The author is extremely grateful to Marie-Claude Arnaud and Andrea Venturelli for all their preciuos advices and for many discussions.
\section{Notation, main definitions and first properties of torsion}\label{sect 2}
Let $\T=\R/\Z$ and let $\A=\T\times\R$. Denote as $p:\R\rightarrow\T$ and as $p\times\text{Id}:\R^2\rightarrow\A$ the universal coverings of $\T$ and of $\A$ respectively. The functions $p_1,p_2$ denote the projections over the first and the second coordinate respectively over $\A$ and, with an abuse of notation, also over $\R^2$.\\
\noindent Endow $\A$ with the standard Riemannian metric and the standard trivialization. We fix the counterclockwise orientation of $\R^2$. Thus the notion of oriented angle between two non zero vectors is well-defined.
\begin{definizione}[see \cite{Hirsch}]
	Let $M,N$ be differential manifolds and let $f,g:M\rightarrow N$ be in $\text{Diff }^1(M,N)$. An isotopy $(\phi_t)_{t\in[0,1]}$ joining $f$ to $g$ is an arc in $\text{Diff }^1(M,N)$ such that $\phi_0=f,\phi_1=g$ and which is continuous with respect to the weak or compact-open $\mathcal{C}^1$ topology on $\text{Diff }^1(M,N)$.
\end{definizione}
\begin{definizione}
	Let $I\subset \R$ be an interval. A continuous determination of an angle function $\theta:I\rightarrow\T$ is a continuous lift of $\theta$, that is a continuous function $\tilde{\theta}:I\rightarrow\R$ such that $\tilde{\theta}(s)$ is a measure of the oriented angle $\theta(s)$ for any $s\in I$.
\end{definizione}
Let $f:\A\rightarrow\A$ be a $\mathcal{C}^1$ diffeomorphism isotopic to the identity. Let $(f_t)_{t\in[0,1]}$ be an isotopy joining the identity to $f$. Extend the isotopy for any positive time so that for any $t\in\R_+$ the $\mathcal{C}^1$ diffeomorphism $f_t:\A\rightarrow\A$ is defined as $f_t:=f_{\{t\}}\circ f^{\lfloor t\rfloor}$, where $\{\cdot\},\lfloor \cdot \rfloor$ are the fractionary and integer part respectively. We denote $I=(f_t)_{t\in\R_+}$ the extended isotopy. For any $x\in\A$ denote as $\chi=(0,1)\in T_x\A$ the unitary positive vertical vector.\\
\noindent The definition of torsion we adopt is the one given by Béguin and Boubaker in \cite{beguin}. The asymptotic torsion is actually Ruelle's rotation number, see \cite{ruelle}.
\begin{definizione}
	Let $x\in\A,\xi\in T_x\A,\xi\neq 0$. Define the oriented angle function
	\begin{equation}
	\R_+\ni t\mapsto v(I,x,\xi)(t):=\theta(\chi,Df_t(x)\xi)\in\T,
	\end{equation}
	where $\theta(u,v)$ denotes the oriented angle between the two non zero vectors $u,v$.\\
	\noindent Denote
	\begin{equation}
	\R_+\ni t\mapsto \tilde{v}(I,x,\xi)(t)\in\R
	\end{equation}
	a continuous determination of the continuous oriented angle function $v(I,x,\xi)(\cdot)$.\\
	\noindent The torsion at finite time $n\in\N^*$ of $(x,\xi)\in T\A,\xi\neq 0$ is
	\begin{equation}
	\text{Torsion}_n(I,x,\xi):=\dfrac{\tilde{v}(I,x,\xi)(n)-\tilde{v}(I,x,\xi)(0)}{n}.
	\end{equation}
	The torsion at $x\in\A$ is, whenever it exists,
	\begin{equation}
	\text{Torsion}(I,x):=\lim_{n\rightarrow+\infty}\text{Torsion}_n(I,x,\xi).
	\end{equation}
\end{definizione}
\begin{remark}\label{prop torsion}
	The torsion at finite time does not depend on the choice of the continuous determination of the oriented angle function. Moreover, it is independent from the choice of the isotopy joining the identity to $f$, see Proposition 2.5 in \cite{Flo19}. The (asymptotic) torsion, whenever it exists, does not depend on the tangent vector used to calculate the finite time torsion. We refer to \cite{beguin} for these properties.
\end{remark}
\begin{notazione}
	Since the torsion does not depend on the chosen isotopy, we write\\ \noindent $\text{Torsion}_n(f,x,\xi)$ and $\text{Torsion}(f,x)$.
\end{notazione}
\begin{esempio}\label{es pend}
	Consider the dynamical system of the simple pendulum obtained by the Hamiltonian $H(\theta,r)=\frac{r^2}{2}-\frac{\cos(2\pi \theta)}{4\pi^2}$. Let $(\phi_t)_{t\in\R}$ be the associated flow and consider the time-one flow $f=\phi_1$. Let $U$ denote the open region contained between the separatrices of the pendulum system, see Figure \ref{figpend}.\\
	\noindent Every point $z$ not belonging to $U$ has zero (asymptotic) torsion. The elliptic point $(0,0)$ has torsion equal to $-1$. Every point $z\in U\setminus\{(0,0)\}$ is periodic and has torsion equal to $-\frac{1}{T(z)}$, where $T(z)$ is the period of $z$.
\end{esempio}
\begin{figure}[h]
	\centering
	\includegraphics[scale=.1]{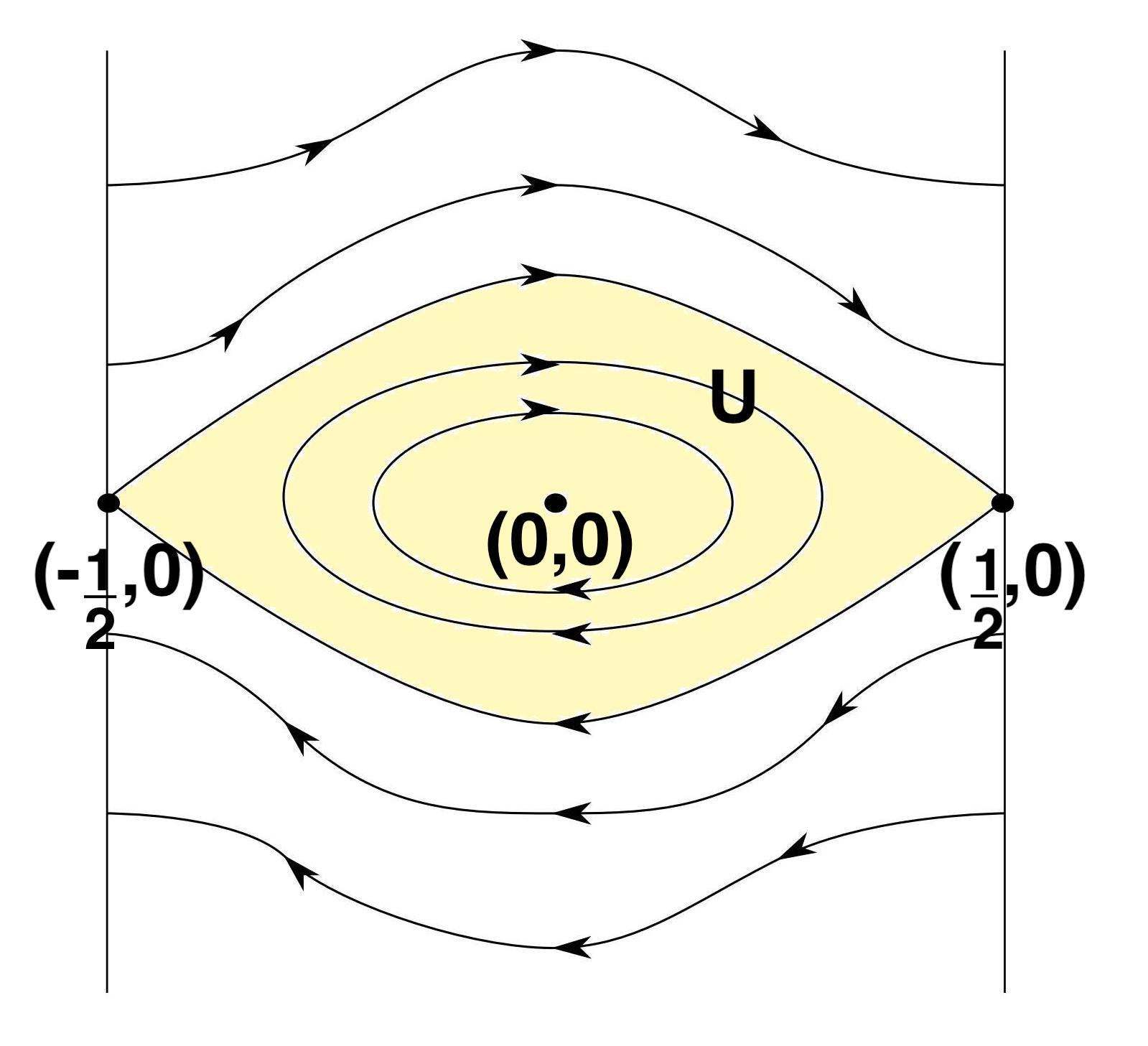}
	\caption{Phase portrait of the pendulum system of Example \ref{es pend}.}
	\label{figpend}
\end{figure}
\begin{definizione}
	A $\mathcal{C}^1$ diffeomorphism isotopic to the identity $f:\A\rightarrow\A$ is a negative-torsion map (respectively a positive-torsion map) if for any $z\in\A$ it holds
	\begin{equation}
	\text{Torsion}_1(f,z,\chi)<0\quad\left(\text{ respectively }>0\right).
	\end{equation}
\end{definizione}
\begin{esempio}
	Every positive twist map is a negative-torsion map (see \cite{Flo19}). Actually the notion of negative-torsion map is equivalent to the notion of positive tilt map: see \cite{Hu}, \cite{GidRob} and \cite{FloPHD}. We remark that the notion of negative-torsion (positive-torsion) maps can be given also in terms of positive (negative) paths according to the definitions in \cite{Her83} and \cite{PLCB}.
\end{esempio}
\begin{definizione}
	An essential curve is a $\mathcal{C}^0$ embedded circle in $\A$ not homotopic to a point.
\end{definizione}

\noindent We recall here some properties of finite-time torsion. We refer to \cite{beguin}, \cite{Flo19} and \cite{FloPHD} for the proofs.
\begin{proprieta}\label{indep vector}
	Fix $x\in\A$. If we calculate finite-time torsion at $x$ with respect to different tangent vectors, we can control the error. Indeed, for $n\in\N^*$ and for $\xi,\delta\in T_x\A\setminus\{0\}$
	$$
	\abs{\text{Torsion}_n(f,x,\xi)-\text{Torsion}_n(f,x,\delta)}<\dfrac{1}{2n},
	$$
	see Lemma 2.1 in \cite{Flo19}.
\end{proprieta}
\begin{proprieta}\label{order vector angle}
Let $f:\A\rightarrow\A$ be a $\mathcal{C}^1$ diffeomorphism isotopic to the identity. Let $I$ be an isotopy in $\text{Diff }^1(\A)$ joining the identity to $f$. Let $x\in\A$ and $\xi_1,\xi_2\in T_x\A\setminus\{0\}$. Let $\tilde{v}(I,x,\xi_1)(\cdot),\tilde{v}(I,x,\xi_2)(\cdot)$ be continuous determinations of the angle functions $v(I,x,\xi_1)(\cdot),v(I,x,\xi_2)(\cdot)$ respectively.\\
\noindent If $$\tilde{v}(I,x,\xi_1)(0)>\tilde{v}(I,x,\xi_2)(0),$$ then for any $t\in\R$ it holds $$\tilde{v}(I,x,\xi_1)(t)>\tilde{v}(I,x,\xi_2)(t).$$ See Proposition 2.2 in \cite{Flo19}.
\end{proprieta}
\begin{proprieta}\label{lemma techinque}
	Let $f:\A\rightarrow\A$ be a $\mathcal{C}^1$ diffeomorphism isotopic to the identity. Let $a\in\A$. Let $\mathscr{N}\in\N^*,(\mathscr{K}_i)_{i\in\llbracket 0,\mathscr{N}-1\rrbracket}\in\N^{\mathscr{N}}$ and $l_0=0<l_1<\dots<l_{\mathscr{N}}$ with $l_i\in\N$. Assume that for all $i\in\llbracket 0,\mathscr{N}-1\rrbracket$ it holds
	$$
	(l_{i+1}-l_i)\text{Torsion}_{l_{i+1}-l_i}(f,f^{l_i}(a),\chi)<-\dfrac{\mathscr{K}_i}{2}.
	$$
	Then for any vector $\xi\in T_a\A\setminus\{0\}$ we have
	$$
	l_{\mathscr{N}}\text{Torsion}_{l_{\mathscr{N}}}(f,a,\xi)<-\dfrac{\sum_{i=0}^{\mathscr{N}-1}\mathscr{K}_i}{2}+\dfrac{1}{2}.
	$$
	In particular, when $\xi=\chi$, we have
	$$
	l_{\mathscr{N}}\text{Torsion}_{l_{\mathscr{N}}}(f,a,\chi)<-\dfrac{\sum_{i=0}^{\mathscr{N}-1}\mathscr{K}_i}{2}.
	$$
	We refer to \cite{FloPHD}.
\end{proprieta}
\noindent Thus, from Property \ref{lemma techinque}, we bound from above finite-time torsion for negative-torsion maps.
\begin{proprieta}\label{order neg tors}
	Let $f:\A\rightarrow\A$ be a negative-torsion map. Let $m\in\N^*$ and let $z\in\A$ be such that $m\text{Torsion}_m(f,z,\chi)<-\frac{k}{2}$ for some $k\in\N^*$. Then for any $n\geq m,n\in\N^*$ it holds $n\text{Torsion}_n(f,z,\chi)<-\frac{k}{2}$.
\end{proprieta}
\section{Set of points of zero torsion}\label{sect 3}
\noindent Theorem \ref{teo 1} is an outcome of the following two propositions.
\begin{proposizione}\label{prop building sequence}
	Let $f:\A\rightarrow\A$ be a $\mathcal{C}^1$ diffeomorphism isotopic to the identity. Let $\gamma:\T\rightarrow\A$ be a $\mathcal{C}^1$ essential curve. There exists $C\in\R_+$ such that for any $n\in\N^*$ there exists $s_n\in\T$ so that
	\begin{equation}
	\abs{n\text{Torsion}_n(f,\gamma(s_n),\gamma'(s_n))}\leq C.
	\end{equation}
\end{proposizione}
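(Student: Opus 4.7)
The plan is the one sketched in the introduction: for each $n\in\N^*$, pick $s_n\in\T$ such that $f^n(\gamma(s_n))$ is a point of maximum height of $f^n\circ\gamma(\T)$ (existence by compactness), and bound the angle variation of $\gamma'(s_n)$ between times $0$ and $n$ uniformly in $n$. At such $s_n$, the derivative of $s\mapsto p_2(f^n(\gamma(s)))$ vanishes, so $(f^n\circ\gamma)'(s_n)=Df^n(\gamma(s_n))\gamma'(s_n)$ is a nonzero horizontal vector.

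Next I would build a joint continuous angle determination $\Phi:\T\times[0,n]\to\R$, where $\Phi(s,t)$ is a continuous determination (in both variables) of $\theta(\chi,Df_t(\gamma(s))\gamma'(s))$. The initial slice $\Phi(\cdot,0)$ is a continuous lift of the angle map $s\mapsto\theta(\chi,\gamma'(s))\in\T$: it exists because the tangent-angle function of any $\mathcal{C}^1$ embedded essential curve has degree $0$ (any such curve is isotopic through embedded essential curves to a horizontal circle, for which the statement is obvious), and it is periodic and bounded by a constant $C_\gamma$ depending only on $\gamma$. For each fixed $s$, the slice $t\mapsto\Phi(s,t)$ is then the unique continuous determination with value $\Phi(s,0)$ at $t=0$, so $n\,\text{Torsion}_n(f,\gamma(s_n),\gamma'(s_n))=\Phi(s_n,n)-\Phi(s_n,0)$, with $|\Phi(s_n,0)|\le C_\gamma$. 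The proposition therefore reduces to bounding $|\Phi(s_n,n)|$ by a constant independent of $n$.

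Since $(f^n\circ\gamma)'(s_n)$ is horizontal we have $\Phi(s_n,n)\in\tfrac14+\tfrac12\Z$ (in the normalization $\T=\R/\Z$), and the task is to bound the half-integer ambiguity uniformly in $n$. I expect the argument to proceed by tracking how the max-height parameter of $f_t\circ\gamma$ evolves with $t$ and by applying an intermediate-value argument on the horizontal-tangent level set $\{(s,t):\Phi(s,t)\in\tfrac14+\tfrac12\Z\}\subset\T\times[0,n]$ to show that $\Phi(s_n,n)$ cannot accumulate extra half-turns: each additional turn would force some intermediate essential curve $f_t\circ\gamma$ to develop excess horizontal tangencies, in tension with the degree-$0$ tangent-angle property enjoyed by every embedded essential curve. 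Property~\ref{indep vector} then transfers the resulting estimate between $\gamma'(s_n)$ and $\chi$ at cost $1/2$. The main obstacle is precisely this rigidity step: ruling out hidden full turns requires a genuinely global argument over the family $(f_t\circ\gamma)_{t\in[0,n]}$, since local information at $s_n$ alone is not enough, and making it quantitative as an explicit constant $C$ is the technical heart.
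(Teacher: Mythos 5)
You set up the right starting point: pick $s_n$ so that $f^n\circ\gamma(s_n)$ has maximal height in $f^n\circ\gamma(\T)$, observe that $(f^n\circ\gamma)'(s_n)$ is horizontal, build a joint continuous angle determination, and reduce to bounding $\Phi(s_n,n)$. This is exactly the paper's launching pad, and your bound $|\Phi(\cdot,0)|\le C_\gamma$ is what the paper calls the complexity $C(\gamma)$.

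But the ``technical heart'' you flag as the main obstacle is where the proposal actually fails, and the reason you give for why it should work is incorrect. Having excess horizontal tangencies on some intermediate curve $f_t\circ\gamma$ is \emph{not} in tension with the degree-$0$ tangent-angle property: a wiggly embedded essential curve can have arbitrarily many horizontal tangencies while still having tangent-angle degree zero (degree counts net turning around the whole circle, not the number of times the tangent crosses the horizontal direction). So the intermediate-value argument you sketch on the set $\{(s,t):\Phi(s,t)\in\tfrac14+\tfrac12\Z\}$ has no actual contradiction waiting at the end of it. What the paper uses is a genuinely stronger fact than degree $0$ (Proposition~\ref{proposizione variazione curva punti altezza max}): the angle variation along an embedded essential curve \emph{between two points of maximal height} is exactly zero. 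This does not follow from degree considerations alone; it is proved by applying the Turning Tangent Theorem to a simple closed piecewise-$\mathcal{C}^1$ curve built from the arc between the two maxima together with two vertical segments and one horizontal segment.

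The other, smaller, difference is that the paper removes your half-integer ambiguity entirely. Rather than comparing $Df_t(\gamma(s_t))\gamma'(s_t)$ to the vertical $\chi$ (which puts $\Phi(s_n,n)$ in $\tfrac14+\tfrac12\Z$), the paper compares it to the tangent $\gamma'(s_0)$ at the initial max-height point, which is horizontal and in the \emph{same} horizontal direction (determined by whether $\gamma$ is homotopic to $\mathbf{c}_1$ or $\mathbf{c}_{-1}$). The resulting quantity
$$
\Phi(t)=t\,\text{Torsion}_t(f,\gamma(s_t),\gamma'(s_t))+Var_{\gamma}(\gamma(s_0),\gamma(s_t)),
$$
with $s_t$ a max-height parameter of $f_t\circ\gamma$, is $\Z$-valued. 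Proposition~\ref{proposizione variazione curva punti altezza max} then shows $\Phi(t)$ is independent of the choice of $s_t$, a compactness-of-graph argument shows $\Phi$ is continuous, and since $\Phi(0)=0$ one concludes $\Phi\equiv 0$, giving $|n\,\text{Torsion}_n(f,\gamma(s_n),\gamma'(s_n))|=|Var_\gamma(\gamma(s_0),\gamma(s_n))|\le C(\gamma)$. (Also note: Property~\ref{indep vector} is not needed here since the statement is in terms of $\gamma'(s_n)$; that transfer to $\chi$ is invoked later in the proof of Theorem~\ref{teo 1}.)
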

\noindent We postpone the proof of Proposition \ref{prop building sequence} to Subsections \ref{subsec 1} and \ref{subsection 2}. In Subsection \ref{subsec 1}, we will first show it in the simpler case of simple circle curves, that is $\gamma(\T)=\T\times\{r\}$ for $r\in\R$, using the link between torsion and linking number. Then we will prove the proposition in the general case of $\mathcal{C}^1$ essential curves, see Subsection \ref{subsection 2}.
\begin{proposizione}\label{prop bounded before}
	Let $f:\A\rightarrow\A$ be a negative-torsion map. Let $C\in\R_+$ and let $z\in\A$ be such that $\abs{n\text{Torsion}_n(f,z,\chi)}\leq C$ for some $n\in\N^*$. Let $K=\lfloor 2C\rfloor+2$. Then for any $m\in\llbracket 1, n\rrbracket$ it holds
	\begin{equation}
	m\text{Torsion}_m(f,z,\chi)\in\Big[-\dfrac{K}{2},0\Big).
	\end{equation}
\end{proposizione}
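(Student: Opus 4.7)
The plan is to split the target inclusion $m\text{Torsion}_m(f,z,\chi)\in[-K/2,0)$ into its two one-sided bounds and to establish each of them by a direct appeal to one of the finite-time torsion tools collected in Section \ref{sect 2}, with no common intermediate step.

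For the upper bound $m\text{Torsion}_m(f,z,\chi)<0$ I would apply Property \ref{lemma techinque} to $a=z$ with $\mathscr{N}=m$, the equispaced decomposition $l_i=i$ for $0\leq i\leq m$, and all constants $\mathscr{K}_i=0$. The required hypothesis then reads $\text{Torsion}_1(f,f^i(z),\chi)<0$ for each $i\in\llbracket 0,m-1\rrbracket$, which is precisely the negative-torsion assumption on $f$ applied along the orbit of $z$. The ``in particular'' conclusion of Property \ref{lemma techinque} for $\xi=\chi$ then gives $m\text{Torsion}_m(f,z,\chi)<-\sum_i\mathscr{K}_i/2=0$, as desired.

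For the lower bound $m\text{Torsion}_m(f,z,\chi)\geq-K/2$ I would argue by contradiction: assume that for some $m\in\llbracket 1,n\rrbracket$ one has $m\text{Torsion}_m(f,z,\chi)<-K/2$. Since $K=\lfloor 2C\rfloor+2\geq 2$ belongs to $\N^*$, Property \ref{order neg tors} applied with the integer $k=K$ propagates the strict bound forward, giving $p\text{Torsion}_p(f,z,\chi)<-K/2$ for every integer $p\geq m$, and in particular for $p=n$. By construction one has $K>2C$, so $-K/2<-C$, which contradicts the hypothesis $\abs{n\text{Torsion}_n(f,z,\chi)}\leq C$. Combining the two inequalities yields the claimed inclusion.

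No substantive obstacle is expected: the argument is essentially a bookkeeping assembly of the two recalled properties. The two minor points to watch are that the choice $\mathscr{K}_i=0$ is admissible in Property \ref{lemma techinque} (since the $\mathscr{K}_i$ are required to lie in $\N$ rather than in $\N^*$, so the hypothesis just becomes the negative-torsion condition), and that the precise value $K=\lfloor 2C\rfloor+2$ is tailored so that $K/2>C$ regardless of whether $2C$ is an integer, which is exactly what makes the final contradiction tight.
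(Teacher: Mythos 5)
Your proof is correct and follows essentially the same route as the paper's. For the lower bound the paper applies Property~\ref{lemma techinque} directly with $\mathscr{N}=2$, $l_1=m$, $l_2=n$, $\mathscr{K}_0=K$, $\mathscr{K}_1=0$, which is exactly the content of Property~\ref{order neg tors} that you invoke; the two arguments are therefore the same step packaged differently, and your computation that $K/2>C$ independently of whether $2C\in\Z$ matches the paper's observation that $-K/2\leq -C-1/2$. Your treatment of the upper bound via Property~\ref{lemma techinque} with all $\mathscr{K}_i=0$ is in fact slightly cleaner than the paper's terse citation of Property~\ref{order neg tors} (which as stated requires $k\in\N^*$ and so does not literally yield $m\,\text{Torsion}_m<0$ without the extra step you supply).
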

\begin{proof}
	Since $f$ is a negative-torsion map and by Property \ref{order neg tors}, for any $z\in\A$ and  for any $m\in\N^*$ it holds $m\text{Torsion}_m(f,z,\chi)<0$. Let $z\in\A$ and $n\in\N$ be such that $\abs{n\text{Torsion}_n(f,z,\chi)}\leq C$. In particular, by the negative-torsion property, it holds $$n\text{Torsion}_n(f,z,\chi)\in[-C,0).$$ Argue by contradiction and assume that there exists $m\in\llbracket 1,n\rrbracket$ such that
	$$
	m\text{Torsion}_m(f,z,\chi)<-\dfrac{K}{2}\leq -C-\dfrac{1}{2}.
	$$
	If $m=n$ we contradict the hypothesis. Thus, we have $m<n$. Again because $f$ is a negative-torsion map, it holds $(n-m)\text{Torsion}_{n-m}(f,f^m(z),\chi)<0$.\\
	\noindent Apply then Property \ref{lemma techinque} for $f$ at $z$ with respect to $\mathscr{N}=2,l_1=m,l_2=n,\mathscr{K}_1=K,\mathscr{K}_2=0$. We so obtain
	$$
	n\text{Torsion}_n(f,z,\chi)<-\dfrac{K}{2}<-C,
	$$
	which is the required contradiction.
\end{proof}

\noindent \textit{Proof of Theorem \ref{teo 1}.} Let $(s_n)_{n\in\N^*}\subset\T^{\N^*}$ be the sequence of points built in Proposition \ref{prop building sequence}. That is, for any $n\in\N^*$ it holds
$$
\abs{n\text{Torsion}_n(f,\gamma(s_n),\gamma'(s_n))}\leq C,
$$
where $C$ does not depend on $n$. By the properties of finite-time torsion, see Property \ref{indep vector}, we have
$$
\abs{n\text{Torsion}_n(f,\gamma(s_n),\chi)}\leq C+\dfrac{1}{2}.
$$
Denote as $s_{\infty}\in\T$ a limit point of the sequence $(s_n)_{n\in\N^*}$. This is our candidate point of zero torsion.\\
\noindent Fix $N\in\N^*$. Let $\varepsilon>0$. Up to subsequences and by the continuity of finite time torsion with respect to the point, there exists $\bar{n}\in\N^*,\bar{n}>N$ such that
$$
\abs{N\text{Torsion}_N(f,\gamma(s_{\infty}),\chi)-N\text{Torsion}_N(f,\gamma(s_{\bar{n}}),\chi)}<\varepsilon.
$$
Let $K=\lfloor 2C+1\rfloor+2$. By Proposition \ref{prop bounded before} and since $f$ is a negative-torsion map (see Property \ref{order neg tors}), it holds
$$
0> N\text{Torsion}_N(f,\gamma(s_{\infty}),\chi)=
$$
$$
=\left(N\text{Torsion}_N(f,\gamma(s_{\infty}),\chi)-N\text{Torsion}_N(f,\gamma(s_{\bar{n}}),\chi)\right)+N\text{Torsion}_N(f,\gamma(s_{\bar{n}}),\chi)>-\varepsilon-\dfrac{K}{2}.
$$
Thus
$$
\text{Torsion}_N(f,\gamma(s_{\infty}),\chi)\in\Big[-\dfrac{K}{2N},0\Big).
$$
In particular $K$ is independent from $N$ and, as $N$ goes to $+\infty$, we conclude that $$\text{Torsion}(f,\gamma(s_{\infty}))=0.$$
\hfill\qed
\subsection{A first simpler case: simple circle curves and Corollary \ref{cor 1}}\label{subsec 1}
\noindent Proposition \ref{prop building sequence} can be proved easily in the framework of simple circle curves, that is if $\gamma(\T)=\T\times\{r\}$ for $r\in\R$, by using the notion of linking number and the relation between linking number and torsion. The linking number for a diffeomorphism of the plane of two points measures the average rotational velocity at which the orbit of the first point turns around the orbit of the second one. We refer to \cite{beguin} for a deeper discussion.

\begin{definizione}
Let $I=(F_t)_{t\in\R_+}$ be an isotopy in $\text{Diff }^1(\R^2)$ joining the identity to $F_1=F$ and such that $F_{1+t}=F_t\circ F$. Fix $x,y\in\R^2,x\neq y$. Denote $\chi=(0,1)$. Define the oriented angle function
$$
\R_+\ni t\mapsto u(I,x,y)(t):=\theta(\chi,F_t(y)-F_t(x))\in\T,
$$
where $\theta(u,v)$ denotes the oriented angle between the two non zero vectors $u,v$.\\
\noindent Since $u(I,x,y)(\cdot)$ is continuous, consider a continuous determination $\R_+\ni t\mapsto \tilde{u}(I,x,y)(t)\in\R$ of such oriented angle function.\\
\noindent For every $n\in\N^*$ the linking number at finite time $n$ of $x$ and $y$ is
$$
\text{Linking}_n(I,x,y):=\dfrac{\tilde{u}(I,x,y)(n)-\tilde{u}(I,x,y)(0)}{n}.
$$
\noindent The linking number of $x$ and $y$ is, whenever the limit exists,
$$
\text{Linking}(I,x,y):=\lim_{n\rightarrow+\infty}\text{Linking}_n(I,x,y).
$$
\end{definizione}
\noindent Concerning the relation between torsion and linking number, we recall here Corollary 3.1 in \cite{Flo19}. The torsion is calculated with respect to the standard trivialization. The vertical vector $\chi=(0,1)$ is used to measure oriented angles.
\begin{corollario}[Corollary 3.1 in \cite{Flo19}]\label{cor flo 3.1}
	Let $F:\R^2\rightarrow\R^2$ be a $\mathcal{C}^1$ diffeomorphism isotopic to the identity and let $I$ be an isotopy joining the identity to $F_1=F$. Assume there exist $n\in\N^*$ and $x,y\in\R^2,x\neq y$ such that $\text{Linking}_n(I,x,y)=l\in\R$. Then there exists $z\in[x,y]$, where $[x,y]$ denotes the segment joining the points $x,y$, such that $\text{Torsion}_n(I,z,y-x)=l$.
\end{corollario}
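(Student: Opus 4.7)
The plan is to use the fundamental theorem of calculus along the segment $[x,y]$ together with a continuous intermediate-value argument to match rotations at time $n$. Parametrise the segment as $z_\lambda := x + \lambda(y-x)$, $\lambda\in[0,1]$. For every $t\ge 0$ one has the key identity
\[
F_t(y) - F_t(x) \;=\; \int_0^1 DF_t(z_\lambda)(y-x)\,d\lambda,
\]
which expresses the vector $W(t):=F_t(y)-F_t(x)$, whose angular evolution on $[0,n]$ defines $\text{Linking}_n(I,x,y)$, as the $\lambda$-average of the vectors $V_t(\lambda):=DF_t(z_\lambda)(y-x)$, whose angular evolutions on $[0,n]$ define $\text{Torsion}_n(I,z_\lambda,y-x)$.

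The next step is to pass to continuous determinations synchronised at $t=0$. Since $F_0=\mathrm{Id}$, one has $V_0(\lambda) = y-x = W(0)$ for every $\lambda$, so the angles of all these vectors with $\chi$ at $t=0$ share a common value $c_0$. Let $\Theta(\lambda,t)$ be the continuous determination of the angle of $V_t(\lambda)$ with $\chi$ normalised by $\Theta(\lambda,0)=c_0$, and let $\Psi(t)$ be the continuous determination of the angle of $W(t)$ with $\chi$ normalised by $\Psi(0)=c_0$. Then $n\,\text{Torsion}_n(I,z_\lambda,y-x) = \Theta(\lambda,n)-c_0$ and $n\,\text{Linking}_n(I,x,y) = \Psi(n)-c_0 = nl$, so the Corollary reduces to producing $\lambda^*\in[0,1]$ with $\Theta(\lambda^*,n)=\Psi(n)$.

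To produce such a $\lambda^*$ I would combine two intermediate-value considerations drawn from the integral identity. Taking the cross product with $W(t)$ gives $\int_0^1\det(W(t),V_t(\lambda))\,d\lambda = \det(W(t),W(t)) = 0$, so by the IVT there is, at each $t$, some $\lambda_1(t)\in[0,1]$ at which $V_t(\lambda_1(t))$ is parallel to $W(t)$. Taking the scalar product with $W(t)$ gives $\int_0^1\langle V_t(\lambda),W(t)\rangle\,d\lambda = |W(t)|^2 > 0$, which forces $V_t$ to lie in the open half-plane containing $W(t)$ at some $\lambda_2(t)\in[0,1]$. These two pieces of information, read together on the continuous lifts, locate a point at which $V_t$ is a positive scalar multiple of $W(t)$, that is, at which $\Theta(\lambda,t)-\Psi(t)\in 2\pi\Z$ on the correct branch. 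Since at $t=0$ one has $\Theta(\lambda,0)-\Psi(0)=0$ for every $\lambda\in[0,1]$, the correct branch is $\Theta-\Psi=0$, and I would track it continuously in $t$ to obtain at $t=n$ a point $\lambda^*$ with $\Theta(\lambda^*,n)=\Psi(n)$; the corresponding $z_{\lambda^*}\in[x,y]$ then satisfies $\text{Torsion}_n(I,z_{\lambda^*},y-x)=l$.

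The hard part is precisely the continuous branch-tracking in the final step. Parallelism of $V_t(\lambda)$ and $W(t)$ alone does not distinguish a positive from a negative multiple, so a careless argument would only give $\Theta(\lambda^*,n)-\Psi(n)\in 2\pi\Z$ rather than exact equality, which would correspond to a shift of Torsion by an integer divided by $n$. The synchronisation at $t=0$, where the zero set of $\eta(\lambda,t):=\Theta(\lambda,t)-\Psi(t)$ is the whole bottom edge $[0,1]\times\{0\}$ of the parameter square, together with the positivity of the scalar-product integral, is what selects the correct branch; rigorously, one must argue that a connected component of $\eta^{-1}(0)$ reaches the top edge $[0,1]\times\{n\}$, and this implicit-function-theorem/continuity argument is where the technical heart of the proof lies.
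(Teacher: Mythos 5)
The corollary is cited from \cite{Flo19}; the present paper does not reproduce its proof, so I evaluate your proposal on its own terms. The overall plan --- use $F_t(y)-F_t(x)=\int_0^1 DF_t(z_\lambda)(y-x)\,d\lambda$, synchronise all lifted angles at $t=0$, and produce $\lambda^*$ with $\Theta(\lambda^*,n)=\Psi(n)$ --- has the right shape, but the ingredient you supply for the decisive step is demonstrably insufficient. You claim that the identities $\int_0^1\det(W(t),V_t(\lambda))\,d\lambda=0$ and $\int_0^1\langle V_t(\lambda),W(t)\rangle\,d\lambda=|W(t)|^2>0$ ``locate a point at which $V_t$ is a positive scalar multiple of $W(t)$.'' That is false in general: take $W=(1,0)$ and $V_\lambda=(-1+6(2\lambda-1)^2,\,1-2\lambda)$ for $\lambda\in[0,1]$. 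Then $V_\lambda\neq 0$ and $\int_0^1 V_\lambda\,d\lambda=W$, so both identities hold, yet $\det(W,V_\lambda)=1-2\lambda$ vanishes only at $\lambda=1/2$, where $V_{1/2}=(-1,0)=-W$ is a \emph{negative} multiple; no $\lambda$ gives a positive multiple, and every continuous determination of the angle of $V_\lambda$ against $W$ stays strictly on one side of $0$. So the two integral identities alone do not force the fibre $\{t\}\times[0,1]$ to meet $\eta^{-1}(0)$, and a fortiori they cannot by themselves drive a connected component of $\eta^{-1}(0)$ from the bottom edge of the parameter square to the top one.

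This does not refute the corollary: the family above is abstract, and in the dynamical setting the lift of $\eta$ carries history from the degenerate fibre $t=0$ which may rule out that configuration. But your proposal never uses that history beyond the initialisation by $c_0$, and the step you defer --- tracking a component of $\eta^{-1}(0)$ across the square --- is exactly the part that needs a different mechanism. One standard way is to extend the lifted angle continuously to the whole triangle $\{(\lambda,\mu):0\le\lambda\le\mu\le 1\}$ via $(\lambda,\mu,t)\mapsto (F_t(z_\mu)-F_t(z_\lambda))/(\mu-\lambda)$, with diagonal limit $DF_t(z_\lambda)(y-x)$, so that the linking angle at the vertex $(0,1)$ and the torsion angles on the diagonal all live on a single connected parameter space and a genuine intermediate-value argument becomes available there. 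As written, that (or an equivalent) bridge is missing, and the proof is incomplete at precisely the point you yourself identify as its heart.
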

\noindent We proceed now with the proof of Proposition \ref{prop building sequence} in the case of a simple circle curve.
\begin{proposizione}
	Let $f:\A\rightarrow\A$ be a $\mathcal{C}^1$ diffeomorphism isotopic to the identity. Let $r\in\R$ and let $n\in\N^*$. Then there exists $z(r,n)\in\T\times\{r\}$ such that
	\begin{equation}
	\text{Torsion}_n(f,z(r,n),\mathcal{H})=0,
	\end{equation}
	where $\mathcal{H}=(1,0)\in T_{z(r,n)}\A$ is the horizontal positive unitary vector.
\end{proposizione}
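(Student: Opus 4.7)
The plan is to use the Corollary \ref{cor flo 3.1} bridge between linking number and torsion, and to produce a pair of points whose linking number at time $n$ is automatically $0$.

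First I would lift the dynamics to the universal cover. Let $F:\R^2\to\R^2$ be a lift of $f$ (seen as a diffeomorphism of $\A$ via $p\times\mathrm{Id}$), and lift the isotopy $(f_t)$ on $\A$ to an isotopy $I=(F_t)$ on $\R^2$ starting at the identity. Because the isotopy comes from the annulus, each $F_t$ commutes with the horizontal unit translation $T_1(u,v)=(u+1,v)$. Fix any $s_0\in\R$ and put
\[
x=(s_0,r),\qquad y=x+(1,0)=(s_0+1,r).
\]
The two points $x$ and $y$ project to the same point of $\A$, and the segment $[x,y]\subset\R\times\{r\}$ projects bijectively onto $\T\times\{r\}$.

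The key observation is that the $T_1$-equivariance gives $F_t(y)=F_t(x)+(1,0)$ for every $t\geq 0$, so the vector
\[
F_t(y)-F_t(x)=(1,0)=\mathcal{H}
\]
is constant in $t$. Consequently the oriented angle function $u(I,x,y)(t)=\theta(\chi,F_t(y)-F_t(x))$ is identically equal to $\theta(\chi,\mathcal{H})=-\pi/2$ modulo $2\pi$, and its constant lift $\tilde u(I,x,y)\equiv -\pi/2$ yields
\[
\mathrm{Linking}_n(I,x,y)=\frac{\tilde u(I,x,y)(n)-\tilde u(I,x,y)(0)}{n}=0.
\]

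Now I apply Corollary \ref{cor flo 3.1} with $l=0$: there exists a point $\tilde z\in[x,y]$ such that
\[
\mathrm{Torsion}_n(I,\tilde z,y-x)=\mathrm{Torsion}_n(I,\tilde z,\mathcal{H})=0.
\]
Since $[x,y]\subset\R\times\{r\}$, the projection $z(r,n):=(p\times\mathrm{Id})(\tilde z)$ lies in $\T\times\{r\}$, and by naturality of the angle function under the covering map (which is a local isometry preserving the trivialization and the vertical vector $\chi$), the finite-time torsion computed on the plane at $\tilde z$ agrees with the one computed on $\A$ at $z(r,n)$. This gives $\mathrm{Torsion}_n(f,z(r,n),\mathcal{H})=0$, as required.

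The only non-routine point is ensuring that the finite-time torsion on $\R^2$ truly coincides with the one on $\A$, together with the independence of this quantity from the chosen isotopy (Remark \ref{prop torsion}), so that the constant-angle computation above is legitimate; this is standard but should be invoked explicitly. Otherwise, the argument is essentially just the observation that two vertically-aligned $T_1$-translates stay at constant horizontal separation under an annulus diffeomorphism, forcing the linking number to vanish.
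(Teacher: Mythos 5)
Your proof is correct and is essentially the same argument the paper gives: lift to the universal cover, use the fact that the lifted isotopy commutes with the horizontal unit translation so that two horizontally-separated translates $(s_0,r)$ and $(s_0+1,r)$ have constant separation vector $\mathcal{H}$ (hence vanishing linking number at every finite time), and then invoke Corollary~\ref{cor flo 3.1} to extract a point on the connecting segment whose finite-time torsion in the direction $\mathcal{H}$ vanishes. The only cosmetic discrepancy is that you measure angles in radians ($-\pi/2$ mod $2\pi$) whereas the paper normalizes so that a full turn equals $1$; this does not affect the conclusion that the linking number is zero.
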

\begin{proof}
	Let $F:\R^2\rightarrow\R^2$ be a lift of $f$. Let $I=(F_t)_{t\in\R_+}\in\text{Diff }^1(\R^2)$ be the isotopy joining the identity of $\R^2$ to $F$, obtained as lift of an isotopy on $\A$ joining $\text{Id}_{\A}$ to $f$.\\
	\noindent Observe that for any $t\in\R_+$ the function $F_t$ commutes with the translation by $(1,0)$. Consequently, for any fixed $r\in\R$ and for any $n\in\N^*$ it holds
	$$
	\text{Linking}_n(I,(0,r),(1,r))=0.
	$$
	By Corollary 3.1 in \cite{Flo19} (here Corollary \ref{cor flo 3.1}), there exists $z(r,n)\in (p\times \text{Id})([0,1]\times\{r\})$ such that $$\text{Torsion}_n(f,z(r,n),\mathcal{H})=0,$$ where $\mathcal{H}$ is the unitary positive horizontal vector.
\end{proof}

\indent In particular, this proves Theorem \ref{teo 1} for an essential curve $\gamma$ such that $\gamma(\T)=\T\times\{r\}$ for $r\in\R$. Consequently, we can deduce Corollary \ref{cor 1}.\\

\noindent \textit{Proof of Corollary \ref{cor 1}.} By Theorem \ref{teo 1} applied at all simple circle curves, for any $r\in\R$ there exists $z(r)\in\T\times\{r\}$ such that $\text{Torsion}(f,z(r))=0$. Thus
$$
p_2\left(\{z\in\A :\ \text{Torsion}(f,z)=0\}\right)=\R.
$$
We are now interested in the Hausdorff dimension, denoted as $\text{dim}_H$, of the set of points of zero torsion. Recall that if $g$ is a Lipschitz function, then for any set $U$ it holds $\text{dim}_H(U)\geq \text{dim}_H(g(U))$.\\
\noindent Since the projection over the second coordinate $p_2$ is Lipschitz and since $\text{dim}_H(\R)=1$, we conclude that $\text{dim}_H(\{z\in\A :\ \text{Torsion}(f,z)=0\})\geq 1$.\\
\hfill\qed
\subsection{The general case of $\mathcal{C}^1$ essential curves}\label{subsection 2}

\noindent We are now interested in the general case of $\gamma$ being a $\mathcal{C}^1$ essential curve. In order to deal with this case we need to introduce the notion of angle variation along the curve $\gamma$.\\
\noindent Let $\gamma:\T\rightarrow\A$ be a $\mathcal{C}^1$ essential curve and let $x,y\in\gamma(\T)$. Let $s_1,s_2\in\T$ be such that $\gamma(s_1)=x,\gamma(s_2)=y$. Fix $S_1\in\R$ a lift of $s_1$ and let $S_2\in\R$ be the lift of $s_2$ such that $S_2\in(S_1,S_1+1]$.\\
\noindent Define the oriented angle function
$$
\R_+\ni t\mapsto \Theta(\gamma,S_1)(t):=\theta\left(\chi,\dfrac{d\gamma(\tau)}{d\tau}_{ \Big\vert\tau=p(S_1+t)}\right)\in\T,
$$
where $p:\R\rightarrow\T$ is the covering map of $\T$. Equivalently, $\Theta(\gamma,S_1)(t)$ is the oriented angle between $\chi$ and the vector tangent to $\gamma$ at $\gamma(p(S_1+t))$. Denote as $\tilde{\Theta}(\gamma,S_1):\R_+\rightarrow\R$ a continuous determination of such oriented angle function.
\begin{definizione}
The angle variation along $\gamma$ between $x=\gamma(s_1)$ and $y=\gamma(s_2)$ is $$Var_{\gamma}(x,y):=\tilde{\Theta}(\gamma,S_1)(S_2-S_1)-\tilde{\Theta}(\gamma,S_1)(0),$$
where $S_1$ is a lift of $s_1$ and $S_2$ is the lift of $s_2$ in $(S_1,S_+1]$.
\end{definizione}
\noindent Observe that the angle variation along $\gamma$ does not depend on the chosen continuous determination $\tilde{\Theta}(\gamma,S_1)$.

\begin{fatto}\label{prop var lungo gamma}
	We recall here some useful properties of the angle variation along a $\mathcal{C}^1$ essential curve $\gamma$. Let $x,y,z\in\gamma(\T)$.\vspace{5pt}
	
	\begin{itemize}
		\item[$(1)$] $Var_{\gamma}(x,y)$ does not depend on the choice of the lift $S_1$ of $s_1\in\T$ such that $\gamma(s_1)=x$;\vspace{5pt}
		
		\item[$(2)$] $Var_{\gamma}(x,x)=0$;\vspace{5pt}
		
		\item[$(3)$] $Var_{\gamma}(x,y)+Var_{\gamma}(y,z)=Var_{\gamma}(x,z)$.
	\end{itemize}
\end{fatto}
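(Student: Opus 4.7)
The plan is to first establish one preliminary observation on the tangent indicatrix of $\gamma$, from which the three properties follow by unwinding the definitions.

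The key observation is that, for any $\mathcal{C}^1$ essential embedding $\gamma:\T\rightarrow\A$, the continuous determination $\tilde{\Theta}(\gamma,S_1)$ is $1$-periodic, i.e.\ $\tilde{\Theta}(\gamma,S_1)(t+1)=\tilde{\Theta}(\gamma,S_1)(t)$ for every $t\in\R_+$; equivalently, the tangent indicatrix $s\mapsto \gamma'(s)/\norma{\gamma'(s)}$, read through the standard trivialization as a map $\T\rightarrow S^1$, has degree zero. To prove this, I would use that the space of $\mathcal{C}^1$ essential embeddings $\T\rightarrow\A$ of a given winding number is path-connected: any such $\gamma$ is isotopic, through $\mathcal{C}^1$ essential embeddings, to a canonical circle $s\mapsto(s,0)$ (or $s\mapsto(-s,0)$), whose tangent indicatrix is constant and hence has degree $0$. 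The degree varies continuously, and therefore is constant, along such an isotopy, so it vanishes for $\gamma$ as well.

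Granting the observation, property $(1)$ is immediate: replacing $S_1$ by $S_1+k$ with $k\in\Z$ only shifts $S_2$ to $S_2+k$ and changes $\tilde{\Theta}(\gamma,\cdot)$ by an additive integer, both of which cancel in the difference defining $Var_{\gamma}$. Property $(2)$ corresponds to the degenerate case $s_1=s_2$, in which the definition forces $S_2=S_1+1$, and then $Var_{\gamma}(x,x)=\tilde{\Theta}(\gamma,S_1)(1)-\tilde{\Theta}(\gamma,S_1)(0)=0$ by $1$-periodicity.

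For property $(3)$, I would fix a lift $S_1$ of $s_1$, take $S_2\in(S_1,S_1+1]$ the lift of $s_2$, and $S_3\in(S_2,S_2+1]$ the lift of $s_3$. Since $\Theta(\gamma,S_2)(\tau)$ and $\Theta(\gamma,S_1)(\tau+S_2-S_1)$ coincide as angle functions, their continuous lifts differ by an integer constant that cancels in the difference, giving $Var_{\gamma}(y,z)=\tilde{\Theta}(\gamma,S_1)(S_3-S_1)-\tilde{\Theta}(\gamma,S_1)(S_2-S_1)$. Adding $Var_{\gamma}(x,y)$ telescopes to $\tilde{\Theta}(\gamma,S_1)(S_3-S_1)-\tilde{\Theta}(\gamma,S_1)(0)$. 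On the other hand $Var_{\gamma}(x,z)=\tilde{\Theta}(\gamma,S_1)(S_3'-S_1)-\tilde{\Theta}(\gamma,S_1)(0)$ where $S_3'\in(S_1,S_1+1]$ is the lift of $s_3$; since $S_3\in(S_1,S_1+2]$, either $S_3=S_3'$ or $S_3=S_3'+1$, and in both cases the $1$-periodicity from the key observation identifies $\tilde{\Theta}(\gamma,S_1)(S_3-S_1)$ with $\tilde{\Theta}(\gamma,S_1)(S_3'-S_1)$.

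The conceptually hard step is the key observation: this is precisely where the hypothesis that $\gamma$ is a $\mathcal{C}^1$ \emph{embedding} (and not merely an immersion) intervenes, through the isotopy-through-embeddings argument on the annulus. Once the vanishing of the degree is in hand, the three properties are mere bookkeeping on the choice of lifts.
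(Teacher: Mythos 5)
Your proposal is correct in substance, and the structure you identify — a single ``turning number is zero'' observation from which $(1)$--$(3)$ follow by bookkeeping on lifts — is exactly the right one. The bookkeeping itself is accurate: $(1)$ follows because shifting $S_1\mapsto S_1+k$ leaves $S_2-S_1$ unchanged and replaces $\tilde\Theta(\gamma,S_1)$ by a lift of the \emph{same} angle function, hence differing by an additive integer constant that cancels; $(2)$ uses the degenerate case $S_2=S_1+1$ together with $1$-periodicity of $\tilde\Theta(\gamma,S_1)$; and your telescoping argument for $(3)$, including the case split $S_3=S_3'$ or $S_3=S_3'+1$, is exactly what is needed. (A small cosmetic point: $(1)$ does not actually require the key observation at all — it holds for any $\mathcal{C}^1$ closed curve — whereas $(2)$ and $(3)$ genuinely do.)

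The one place where you reach for heavier machinery than the paper's own toolkit is the key observation itself. You prove that the tangent indicatrix has degree zero by invoking path-connectedness of the space of $\mathcal{C}^1$ essential embeddings of a given winding number, so that $\gamma$ is $\mathcal{C}^1$-isotopic through embeddings to $s\mapsto(\pm s,0)$ and the degree, being locally constant along such a path, must vanish. This is correct, but it relies on a nontrivial smooth-isotopy/Schoenflies-type input that the paper never uses. The paper (which only ``recalls'' this Fact and does not prove it) has all the ingredients for a more elementary, self-contained argument in its proof of Proposition~\ref{proposizione variazione curva punti altezza max}: lift $\gamma$ to $\Gamma:\R\to\R^2$, pick $S_0$ a lift of a point of maximal height, close up the arc $\Gamma|_{[S_0,S_0+1]}$ by a vertical segment up, a horizontal segment, and a vertical segment down, obtaining a \emph{simple} piecewise-$\mathcal{C}^1$ closed curve (simplicity uses embeddedness and maximality of height). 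The Turning Tangent Theorem then gives $\tilde\Theta(\Gamma,S_0)(1)-\tilde\Theta(\Gamma,S_0)(0)+4\cdot\tfrac14=1$, i.e.\ the turning over one period is zero, which is precisely the $1$-periodicity you need. So both routes establish the key fact; yours is valid but appeals to a global isotopy result where an Umlaufsatz computation, already present in the paper, would suffice and fit the paper's methods more tightly.
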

\begin{remark}
	Fix $\gamma(s)\in\gamma(\mathbb{T})$. We observe that the function
	$
	\R_+\ni t\mapsto Var_{\gamma}(\gamma(s),\gamma(s+p(t)))\in\R
	$ is $1$-periodic.
\end{remark}
\begin{remark}
	An essential curve $\gamma$ on the annulus is isotopic to either $$\T\ni t\mapsto \mathbf{c}_1(t)=(t,0)\qquad\text{or}\qquad\T\ni t\mapsto\mathbf{c}_{-1}(t)=(-t,0).$$
\end{remark}
\begin{proposizione}\label{proposizione variazione curva punti altezza max}
	Let $\gamma:\mathbb{T}\rightarrow\mathbb{A}$ be a $\mathcal{C}^1$ essential curve. Let $s_0,s_1\in\mathbb{T},s_0\neq s_1$ correspond to points of maximal height on $\gamma$, that is
	$$
p_2\circ\gamma(s_0)=p_2\circ\gamma(s_1)=\max_{s\in\mathbb{T}}p_2\circ\gamma(s),
	$$
	where $p_2:\A\rightarrow\R$ is the projection over the second coordinate. Then
	$$
	Var_{\gamma}(\gamma(s_0),\gamma(s_1))=0.
	$$
\end{proposizione}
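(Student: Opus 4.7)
The plan is to pass to the universal cover, reduce by additivity to the case of two consecutive maxima, and then apply Hopf's Umlaufsatz to a Jordan curve built from the arc of $\tilde\gamma$ between the two lifted maxima and the horizontal segment at height $h=\max p_2\circ\gamma$.

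First I lift $\gamma$ to $\tilde\gamma:\R\to\R^2$, which by essentiality satisfies $\tilde\gamma(s+1)=\tilde\gamma(s)+(\varepsilon,0)$ with $\varepsilon\in\{\pm 1\}$; I assume $\varepsilon=1$, so $\gamma$ is isotopic to $\mathbf{c}_1$ (the case $\varepsilon=-1$ is symmetric). Pick a lift $S_0$ of $s_0$ and the lift $S_1\in(S_0,S_0+1]$ of $s_1$, and write $\tilde\gamma(S_j)=(x_j,h)$. Using the additivity property of $Var_\gamma$ along $\gamma$ (Fact \ref{prop var lungo gamma}(3)), I chain variations along the finite sequence of maximal-height points of $\gamma$ lying between $s_0$ and $s_1$, which reduces the claim to the case where $\gamma(s_0)$ and $\gamma(s_1)$ are consecutive maxima, i.e.\ $p_2\circ\tilde\gamma<h$ strictly on $(S_0,S_1)$.

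At $S_0$ and $S_1$ the vertical component of $\tilde\gamma'$ vanishes (interior maxima of $\tilde\gamma_2$), while the $\mathcal{C}^1$-embedding hypothesis forces $\tilde\gamma'(S_j)=(a_j,0)$ with $a_j\neq 0$. Concatenating $\tilde\gamma|_{[S_0,S_1]}$ with the horizontal segment $\sigma$ at height $h$ from $\tilde\gamma(S_1)$ to $\tilde\gamma(S_0)$ yields a piecewise $\mathcal{C}^1$ closed curve $C$ in $\R^2$; because the arc lies strictly below $\{y=h\}$ on $(S_0,S_1)$, the segment meets the arc only at the two endpoints, and $C$ is a Jordan curve. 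I then apply Hopf's Umlaufsatz: the total signed tangent rotation along $C$ equals $\pm 2\pi$, and this total decomposes as $2\pi\cdot Var_\gamma(\gamma(s_0),\gamma(s_1))$ from the arc, $0$ from $\sigma$, and two corner contributions at $\tilde\gamma(S_0)$ and $\tilde\gamma(S_1)$ whose values depend on the signs of $a_0,a_1$ and of $x_1-x_0$.

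A short case analysis of the corners, together with the global constraint that the total tangent rotation of $\tilde\gamma$ over one fundamental period is zero (the rotation index of $\gamma$ is $0$ since $\gamma$ is isotopic to $\mathbf{c}_1$), forces the only configuration admissible on an embedded essential curve: $a_0,a_1>0$ and $x_0<x_1$. In that aligned case both corners contribute $+\pi$ and the Umlaufsatz equation reads $2\pi\cdot Var_\gamma(\gamma(s_0),\gamma(s_1))+2\pi=2\pi$, giving $Var_\gamma(\gamma(s_0),\gamma(s_1))=0$.

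The step requiring the most care is this sign analysis and the accompanying global-coherence argument excluding every configuration other than the aligned one: Umlaufsatz applied to a single arc leaves room for $Var_\gamma$ to be any element of $\frac{1}{2}\Z$, and it is only the combination of the embedding hypothesis with the vanishing rotation index of $\gamma$ over one period that pins down the single value $0$.
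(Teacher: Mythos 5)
Your strategy is the same as the paper's in spirit: build a piecewise $\mathcal{C}^1$ Jordan curve containing the arc $\Gamma|_{[S_0,S_1]}$, close it up with auxiliary segments, and read off $Var_\gamma(\gamma(s_0),\gamma(s_1))$ from the Turning Tangent Theorem (Hopf Umlaufsatz). Where you diverge is in the choice of closing segments, and that choice is what forces the two extra steps — the reduction to consecutive maxima and the sign analysis of degenerate corners — neither of which is fully justified as written.

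First, the reduction. You close the arc with a single horizontal chord at the maximal height $h$, so the chord meets the arc only at its endpoints precisely when the arc stays strictly below $h$ on $(S_0,S_1)$. To arrange this you invoke additivity of $Var_\gamma$ over ``the finite sequence of maximal-height points lying between $s_0$ and $s_1$.'' But the set $\{s:\ p_2\circ\gamma(s)=h\}$ is only a compact subset of $\mathbb{T}$; it can perfectly well be infinite, even a Cantor set, for a $\mathcal{C}^1$ curve, in which case there is no finite chain of consecutive maxima and your reduction does not go through as stated. The paper sidesteps this entirely: instead of a chord at height $h$, it closes the arc by going \emph{up one unit} from each of $\Gamma(S_0)$ and $\Gamma(S_1)$ and joining the tops by a horizontal segment at height $h+1$. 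Since the arc lies in $\{y\le h\}$ and the three added segments (minus the shared endpoints) lie in $\{y>h\}$, the resulting curve $\mathscr{C}$ is automatically a Jordan curve no matter how the set of maxima is distributed in $[S_0,S_1]$. No reduction to consecutive maxima is needed.

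Second, the corners. In your construction the two corners at $\Gamma(S_0)$ and $\Gamma(S_1)$ are cusp-like: the tangent flips from $\pm\mathcal{H}$ to $\mp\mathcal{H}$, so each exterior angle has magnitude $\pi$, which is exactly the degenerate case of the Umlaufsatz. At such a corner the sign of the $\pi$-contribution is not determined by the tangent data alone; you must argue from the position of the interior of the Jordan curve relative to the incoming tangent (equivalently, use the second-order local behaviour near the cusp). You assert that both corners contribute $+\pi$ but do not supply this argument, and the appeal to ``global coherence'' and the vanishing rotation index of $\gamma$ is an extra input that is itself being used rather loosely to patch the sign ambiguity. The paper's roof construction replaces each degenerate corner by two orthogonal ones, so all four corners of $\mathscr{C}$ have exterior angle $\pi/2$ (each contributing $1/4$ of a turn), and the Umlaufsatz gives at once $Var_\gamma(\gamma(s_0),\gamma(s_1))+\tfrac14\cdot 4=1$, hence $Var_\gamma=0$, with no degenerate cases and no need for a separate rotation-index argument.

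In short: the underlying idea is right and the arithmetic comes out the same, but you should adopt the paper's three-segment ``roof'' at height $h+1$ — it makes the curve a Jordan curve unconditionally (removing the consecutive-maxima reduction you cannot in general perform) and turns the two $\pi$-cusps into four clean $\pi/2$-corners where the Umlaufsatz applies without sign ambiguity.
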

\begin{proof}
	Let $s_0,s_1\in\mathbb{T},s_0\neq s_1$ be such that $p_2\circ\gamma(s_0)=p_2\circ\gamma(s_1)=\max_{s\in\mathbb{T}}p_2\circ\gamma(s)$. Let $S_0\in\R$ be a lift of $s_0\in\mathbb{T}$ and let $S_1\in(S_0,S_0+1)$ be the lift of $s_1$.\\
	\noindent Look now at the lifted framework in $\R^2$ and denote as $\Gamma:\R\rightarrow\R^2$ a lift of $\gamma$. Consider the points $\Gamma(S_0),\Gamma(S_1)$ and build the piecewise $\mathcal{C}^1$ closed curve $\mathscr{C}$ by concatenating the following ones (see Figure \ref{figure birk 1}):\vspace{5pt}
	
	\begin{figure}[h]
		\centering
		\includegraphics[scale=.08]{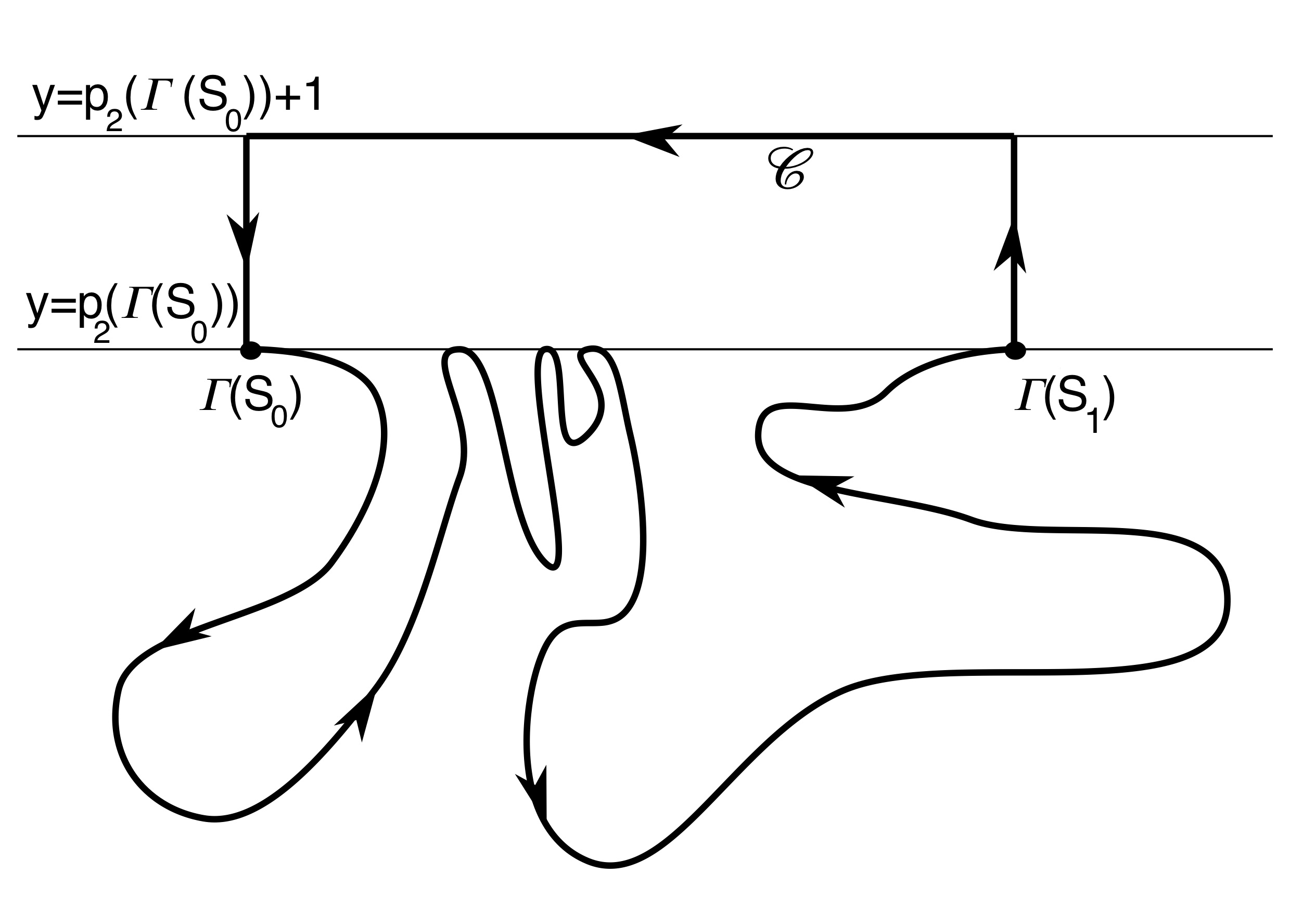}
		\caption{The simple curve built in the proof of Proposition \ref{proposizione variazione curva punti altezza max}.}
		\label{figure birk 1}
	\end{figure}
	\begin{itemize}
		\item $\{ \Gamma(s) :\ s\in[S_0,S_1] \}$;\vspace{7pt}
		
		\item the vertical segment $\{ (p_1\circ\Gamma(S_0),p_2\circ\Gamma(S_0)+\xi) :\ \xi\in[0,1] \}$;\vspace{7pt}
		
		\item the horizontal segment $\{ (\xi p_1\circ\Gamma(S_0)+(1-\xi)p_1\circ\Gamma(S_1),p_2\circ\gamma(S_0)+1) :\ \xi\in[0,1] \}$;\vspace{7pt}
		
		\item the vertical segment $\{ (p_1\circ\Gamma(S_1),p_2\circ\Gamma(S_1)+1-\xi) :\ \xi\in[0,1] \}$.\\
	\end{itemize}
	Such a piecewise $\mathcal{C}^1$ closed curve does not have self-intersections because both $\Gamma(S_0)$ and $\Gamma(S_1)$ are points of maximal height. 
	We are then interested in
	$$
	Var_{\gamma}(\gamma(s_0),\gamma(s_1))=Var_{\Gamma}(\Gamma(S_0),\Gamma(S_1))=\tilde{\Theta}(\Gamma,S_0)(S_1-S_0)-\tilde{\Theta}(\Gamma,S_0)(0).
	$$
	\noindent If $\gamma$ is homotopic to $\mathbf{c}_1$ (respectively to $\mathbf{c}_{-1}$), then \begin{equation}\label{eq claim cc}
	p_1\circ\Gamma(S_0)<p_1\circ\Gamma(S_1)\qquad\left(\text{ resp. }p_1\circ\Gamma(S_0)>p_1\circ\Gamma(S_1)\right)\end{equation}
	and
	\begin{equation}\label{eq calim cc2}
	\Gamma'(S_0),\Gamma'(S_1)\in\R_+\mathcal{H}\qquad\left(\text{ resp. }\Gamma'(S_0),\Gamma'(S_1)\in\R_-\mathcal{H}.\right).\end{equation}

	\noindent In particular, if $\gamma$ is homotopic to $\mathbf{c}_1$ (respectively to $\mathbf{c}_{-1}$) then the closed curve $\mathscr{C}$ is oriented counterclockwisely (respectively clockwisely).\\
	\noindent Apply then the Turning Tangent Theorem to the simple piecewise $\mathcal{C}^1$ closed curve $\mathscr{C}$ described above (see Figure \ref{figure birk 1}).\\
	\noindent By \eqref{eq claim cc} and \eqref{eq calim cc2}, if $\gamma$ is homotopic to $\mathbf{c}_1$, then we have
	$$
	\tilde{\Theta}(\Gamma,S_0)(S_1-S_0)-\tilde{\Theta}(\Gamma,S_0)(0) +\dfrac{1}{4}+\dfrac{1}{4}+\dfrac{1}{4}+\dfrac{1}{4}=1.
	$$
	\noindent A similar result can be obtained if $\gamma$ is homotopic to $\mathbf{c}_{-1}$. Thus
	$$
	\tilde{\Theta}(\Gamma,S_0)(S_1-S_0)-\tilde{\Theta}(\Gamma,S_0)(0)=Var_{\Gamma}(\Gamma(S_0),\Gamma(S_1))=Var_{\gamma}(\gamma(s_0),\gamma(s_1))=0.
	$$
\end{proof}
\begin{notazione}
	Let $s_0\in\T$ be a point of maximal height, that is such that $p_2\circ\gamma(s_0)=\max_{t\in\T}p_2\circ\gamma(t)$. Fix $S_0\in\R$ a lift of $s_0$.
\end{notazione}
\begin{definizione}[Complexity of a $\mathcal{C}^1$ essential curve]\label{def complexity}
The complexity of the curve $\gamma$ is
$$
C(\gamma):=\sup_{t\in\R_+}\abs{Var_{\gamma}(\gamma(p(S_0)),\gamma(p(S_0+t))}=\max_{t\in[0,1]}\abs{Var_{\gamma}(\gamma(p(S_0)),\gamma(p(S_0+t))},
$$
where $p:\R\rightarrow\T$ is the covering map of $\T$.
\end{definizione}
\begin{remark}\label{complexity curve non dipende s0}
	By Proposition \ref{proposizione variazione curva punti altezza max} and by the properties of the angle variation along $\gamma$ (see Fact \ref{prop var lungo gamma}), we remark that the definition of $C(\gamma)$ is independent of the choice of the point $s_0\in\mathbb{T}$ of maximal height.
\end{remark}

\noindent We can now prove Proposition \ref{prop building sequence} for any $\mathcal{C}^1$ essential curve $\gamma$. More precisely
\begin{proposizione}\label{prop building sequence gamma gen}
	Let $\gamma:\T\rightarrow\A$ be a $\mathcal{C}^1$ essential curve of complexity $C(\gamma)$. Let $n\in\N^*$. Then there exists $z(n)=\gamma(s_n)\in\gamma(\T)$ such that
	$$
	\abs{n\text{Torsion}_n(f,\gamma(s_n),\gamma'(s_n))}\leq C(\gamma).
	$$
\end{proposizione}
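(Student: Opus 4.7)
The plan is to choose $s_n\in\T$ so that $f^n(\gamma(s_n))$ is a point of maximal height on the essential curve $f^n\circ\gamma$, and to control the torsion by tracking the oriented tangent angle continuously along the one-parameter family of essential curves $\gamma_t:=f_t\circ\gamma$, $t\in[0,n]$. Set $M(\gamma_t):=\{s\in\T : p_2\circ\gamma_t(s)=\max_{s'\in\T}p_2\circ\gamma_t(s')\}$. The joint oriented angle $(s,t)\mapsto\theta(\chi, Df_t(\gamma(s))\gamma'(s))$ from $\T\times[0,n]$ to $\T$ admits a continuous lift $\phi:\T\times[0,n]\to\R$, because $\gamma$ is essential and embedded (so its tangent Gauss map has degree $0$, cf.\ Fact \ref{prop var lungo gamma}$(2)$) and $\T\times[0,n]$ retracts onto $\T\times\{0\}$. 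Assuming $\gamma$ is homotopic to $\mathbf{c}_1$ (the case $\mathbf{c}_{-1}$ is symmetric), fix $s_0\in M(\gamma)$ with lift $S_0\in\R$ and normalize $\phi(s_0,0)=-1/4$. By construction, for every $s\in\T$,
$$
\phi(s,n)-\phi(s,0)=n\,\text{Torsion}_n(f,\gamma(s),\gamma'(s)).
$$

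The heart of the argument is the claim that $\phi(s,t)=-1/4$ whenever $s\in M(\gamma_t)$. At such a point the tangent $\gamma_t'(s)$ lies in $\R_+\mathcal{H}$ (because $\gamma_t$ is homotopic to $\mathbf{c}_1$, as in the proof of Proposition \ref{proposizione variazione curva punti altezza max}), so $\phi(s,t)\in -1/4+\Z$. Proposition \ref{proposizione variazione curva punti altezza max} applied to the $\mathcal{C}^1$ essential curve $\gamma_t$ shows that $\phi(\cdot,t)$ is constant on $M(\gamma_t)$; denote this common value by $c(t)$, so $c:[0,n]\to -1/4+\Z$. Continuity of $c$ follows from an upper semicontinuity argument: if $t_k\to t$ and $s_k\in M(\gamma_{t_k})$, compactness of $\T$ yields, up to a subsequence, $s_k\to s$; joint continuity of $(s,t)\mapsto p_2\circ\gamma_t(s)$ and of its maximum over $\T$ forces $s\in M(\gamma_t)$, and continuity of $\phi$ then gives $c(t_k)\to c(t)$. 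A continuous function on $[0,n]$ with values in the discrete set $-1/4+\Z$ is constant, so $c\equiv c(0)=-1/4$.

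Granting the claim, pick $s_n\in M(f^n\circ\gamma)=M(\gamma_n)$. Then $\phi(s_n,n)=-1/4=\phi(s_0,0)$, whence
$$
n\,\text{Torsion}_n(f,\gamma(s_n),\gamma'(s_n))=\phi(s_n,n)-\phi(s_n,0)=\phi(s_0,0)-\phi(s_n,0).
$$
Choosing the lift $S_n$ of $s_n$ in $(S_0,S_0+1]$, the right-hand side equals $-Var_\gamma(\gamma(s_0),\gamma(s_n))$, which by Definition \ref{def complexity} of the complexity is bounded in absolute value by $C(\gamma)$, giving the required estimate.

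The main difficulty I anticipate is the continuity of $c$: the set-valued map $t\mapsto M(\gamma_t)$ does not in general admit a continuous selection, since max-height points can jump discontinuously as $t$ varies. The reason the argument still works is that Proposition \ref{proposizione variazione curva punti altezza max} collapses every choice of argmax to the same value of $\phi(\cdot,t)$, so that the single-valued function $c$ is well defined and the weaker upper semicontinuity of $t\mapsto M(\gamma_t)$ suffices to conclude.
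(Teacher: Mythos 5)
Your proposal is correct and takes essentially the same approach as the paper: track a continuous determination of the tangent-vector angle at maximal-height points of $\gamma_t=f_t\circ\gamma$, use Proposition~\ref{proposizione variazione curva punti altezza max} to make the value well-defined on the argmax set, use compactness/upper-semicontinuity to get continuity, and conclude by discreteness that the value is constant, yielding $n\,\text{Torsion}_n(f,\gamma(s_n),\gamma'(s_n))=-Var_\gamma(\gamma(s_0),\gamma(s_n))$. Your function $c(t)$ is exactly the paper's $\Phi(t)$ shifted by $-1/4$ (the paper writes $\Phi(t)=t\,\text{Torsion}_t+Var_\gamma(\gamma(s_0),\gamma(s_t))$ and proves $\Phi\equiv 0$ via the compact-graph criterion; you prove $c\equiv -1/4$ via upper semicontinuity of $t\mapsto M(\gamma_t)$), so the two arguments are the same up to bookkeeping.
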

\begin{notazione}
	For any $t\in \R_+$ denote as $\gamma_t$ the curve
	$$
	\mathbb{T}\ni s\mapsto \gamma_t(s):= f_t(\gamma(s))\in\mathbb{A}.
	$$
	Consider the maximal height function
	$$
	M^h_{\gamma}:\R_+\rightarrow\R
	$$
	$$
	t\mapsto M^h_{\gamma}(t):=\max_{s\in\mathbb{T}}p_2\circ\gamma_t(s).
	$$
	\noindent For any $t\in\R_+$ denote
	\begin{equation}\label{def argmax}
	Argmax(p_2\circ\gamma_t)=\{ s\in\mathbb{T} :\ p_2\circ\gamma_t(s)=M^h_{\gamma}(t) \},
	\end{equation}
	that is $Argmax(p_2\circ\gamma_t)$ is the set of $s\in\mathbb{T}$ whose image through $\gamma_t$ achieves the maximal height among $\gamma_t(\T)$.\\
	\noindent Observe that, since each $\gamma_t$ is $\mathcal{C}^1$, for any $s\in Argmax(p_2\circ \gamma_t)$ 
	the tangent vector $\gamma_t'(s)$ belongs to $\R\mathcal{H}$. For any $t\in\R_+$ denote as $s_t$ an element of $Argmax(p_2\circ\gamma_t)$.
\end{notazione}
\begin{notazione}
	For any $t\in\R_+$ we denote as $t\,\text{Torsion}_t(I,z,\xi)$ the angle variation $\tilde{v}(I,z,\xi)(t)-\tilde{v}(I,z,\xi)(0)$.
\end{notazione}
\begin{notazione}
	Define the function $\Phi:\R_+\rightarrow \Z$
	\begin{equation}\label{definizione Phi}
	\R_+\ni t\mapsto t\,\text{Torsion}_t(f,\gamma(s_t),\gamma'(s_t))+Var_{\gamma}(\gamma(s_0),\gamma(s_t))\in \R.
	\end{equation}
	The function $\Phi$ takes values in $\Z$ because if $\gamma$ is homotopic to $\textbf{c}_1$ (respectively to $\textbf{c}_{-1}$) then both $Df_t(\gamma(s_t))\gamma'(s_t)$ and $\gamma'(s_0)$ belongs to $\R_+\mathcal{H}$ (respectively $\R_-\mathcal{H}$) (see \eqref{eq calim cc2}).
\end{notazione}

\indent The idea of considering points of maximal (respectively minimal) height on a curve is due to P.~Le Calvez (see Section 5 in \cite{PLC91}).
\begin{lemma}\label{Phi non dipende da sn per sn curva zero torsion}
	For any $t\in\R_+$, the value $\Phi(t)$ does not depend on the choice of $s_t\in Argmax(p_2\circ\gamma_t)$.
\end{lemma}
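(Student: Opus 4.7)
The plan is to compare the values of $\Phi(t)$ obtained from two distinct choices $s_1,s_2\in Argmax(p_2\circ\gamma_t)$ and show that they coincide. Forming $\Phi(t;s_2)-\Phi(t;s_1)$ from \eqref{definizione Phi} and applying the additivity of the angle variation along $\gamma$ (Fact \ref{prop var lungo gamma}(3)), the two $Var_\gamma$ contributions collapse into the single term $Var_\gamma(\gamma(s_1),\gamma(s_2))$, so the task reduces to proving
$$
\bigl[t\,\text{Torsion}_t(f,\gamma(s_2),\gamma'(s_2))\bigr]-\bigl[t\,\text{Torsion}_t(f,\gamma(s_1),\gamma'(s_1))\bigr]+Var_\gamma(\gamma(s_1),\gamma(s_2))=0.
$$

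The key step is to recognise the torsion difference on the left-hand side as $Var_{\gamma_t}(\gamma_t(s_1),\gamma_t(s_2))-Var_\gamma(\gamma(s_1),\gamma(s_2))$. To make this precise I would lift to $\R^2$: let $\Gamma:\R\to\R^2$ be a lift of $\gamma$, let $F_\sigma$ be a lift of $f_\sigma$, and pick lifts $S_1,S_2\in\R$ with $S_2\in(S_1,S_1+1]$. Since $\R_+\times\R$ is simply connected, the continuous map
$$
(\sigma,S)\longmapsto DF_\sigma(\Gamma(S))\,\Gamma'(S)\in\R^2\setminus\{0\}
$$
admits a single-valued continuous determination $\tilde\alpha(\sigma,S)$ of its oriented angle with $\chi$. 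Evaluating the increments of $\tilde\alpha$ along the four edges of the rectangle with vertices $(0,S_1),(0,S_2),(t,S_2),(t,S_1)$, the vertical edges recover the two finite-time torsions by the very definition of $\text{Torsion}_t$, while the horizontal edges recover $Var_\gamma(\gamma(s_1),\gamma(s_2))$ at $\sigma=0$ and $Var_{\gamma_t}(\gamma_t(s_1),\gamma_t(s_2))$ at $\sigma=t$. The signed sum of these four increments around the rectangle vanishes, which rearranges to exactly the claimed identity.

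To conclude, I would apply Proposition \ref{proposizione variazione curva punti altezza max} to the curve $\gamma_t=f_t\circ\gamma$. Since $f_t$ is a $\mathcal{C}^1$ diffeomorphism isotopic to the identity, $\gamma_t$ is itself a $\mathcal{C}^1$ essential curve, and $s_1,s_2$ are by assumption two of its points of maximal height. The proposition gives $Var_{\gamma_t}(\gamma_t(s_1),\gamma_t(s_2))=0$, and substituting this into the previous identity yields $\Phi(t;s_2)=\Phi(t;s_1)$.

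The main delicacy I anticipate is the bookkeeping in the middle step: one has to choose the joint continuous determination $\tilde\alpha$ consistently and verify that, restricted to each of the four edges of the rectangle, its increment really does coincide with the definition of the torsion or $Var$ it is supposed to compute. Once this compatibility is checked, which is forced by the simple connectedness of $\R_+\times\R$ together with the independence of $\text{Torsion}_t$ and $Var_\gamma$ from the chosen continuous determination, everything reduces cleanly to Proposition \ref{proposizione variazione curva punti altezza max} applied to $\gamma_t$.
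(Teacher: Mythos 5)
Your proposal is correct and is essentially the paper's own argument, cast in more explicit geometric language. The paper achieves the same effect by writing the continuous determination of the angle function at $\gamma(\bar s_t)$ as $\tau\mapsto\tilde v(f,\gamma(s_t),\gamma'(s_t))(\tau)+Var_{\gamma_\tau}(\gamma_\tau(s_t),\gamma_\tau(\bar s_t))$, which is exactly the cocycle identity you extract from the boundary of the rectangle $[0,t]\times[S_1,S_2]$; both proofs then close by invoking Proposition \ref{proposizione variazione curva punti altezza max} applied to the $\mathcal C^1$ essential curve $\gamma_t$ to get $Var_{\gamma_t}(\gamma_t(s_1),\gamma_t(s_2))=0$.
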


\begin{proof}
	Let $s_t,\bar{s}_t\in Argmax(p_2\circ\gamma_t), s_t\neq\bar{s}_t$. 
	From Proposition \ref{proposizione variazione curva punti altezza max} it holds that
	\begin{equation}\label{eq var uguale nei massimi}
	Var_{\gamma_t}(s_t,\bar{s}_t)=0.
	\end{equation}
	\noindent Recall that the torsion at finite-time does not depend on the chosen continuous determination of the oriented angle function. So we calculate the torsion at $\gamma(\bar{s}_t)$ using the continuous lift
	\begin{equation}\label{la correzione 30}
	\R_+\ni \tau\mapsto \tilde{v}(f,\gamma(s_t),\gamma'(s_t))(\tau)+Var_{\gamma_{\tau}}(\gamma_{\tau}(s_t),\gamma_{\tau}(\bar{s}_t))\in\R,
	\end{equation}
	where $\tilde{v}(f,\gamma(s_t),\gamma'(s_t))(\cdot)$ is a continuous lift of the angle function $\tau\mapsto\theta(\chi,Df_{\tau}(\gamma(s_t))\gamma'(s_t))$. In particular, the function in \eqref{la correzione 30} is a continuous determination of the angle function $\tau\mapsto \theta(\chi,\gamma'_{\tau}(\bar{s}_t))$. The value $\Phi(t)$ calculated with respect to $\bar{s}_t\in Argmax(p_2\circ\gamma_t)$ is
	$$
	t\,\text{Torsion}_t(f,\gamma(\bar{s}_t),\gamma'(\bar{s}_t))+ Var_{\gamma}(\gamma(s_0),\gamma(\bar{s}_t)).
	$$
	Write then $t\, \text{Torsion}_t(f,\gamma(\bar{s}_t),\gamma'(\bar{s}_t))$ using the continuous determination in \eqref{la correzione 30}. Using the properties of $Var_{\gamma_t}$ (see Fact \ref{proposizione variazione curva punti altezza max}) and by \eqref{eq var uguale nei massimi}, we can conclude that
	$$
	t\,\text{Torsion}_t(f,\gamma(\bar{s}_t),\gamma'(\bar{s}_t))+Var_{\gamma}(\gamma(s_0),\gamma(\bar{s}_t))=t\,\text{Torsion}_t(f,\gamma(s_t),\gamma'(s_t))+Var_{\gamma}(\gamma(s_0),\gamma(s_t)),
	$$
	that is $\Phi(t)$ does not depend on the choice of $s_t\in Argmax(p_2\circ\gamma_t)$.\\
\end{proof}
\begin{lemma}\label{Phi e costante nulla}
	The function $\Phi:\R_+\rightarrow \Z$ is the constant zero function.
\end{lemma}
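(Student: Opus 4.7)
The plan is to show that $\Phi$, already known to be $\Z$-valued, is moreover continuous on the connected set $\R_+$. Combined with the easy identity $\Phi(0)=0$, this forces $\Phi\equiv 0$.

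First I would check that $\Phi(0)=0$. At $t=0$ the map $f_0$ is the identity, so $Df_0(\gamma(s_0))\gamma'(s_0)=\gamma'(s_0)$ and a constant lift gives $0\cdot\text{Torsion}_0(f,\gamma(s_0),\gamma'(s_0))=0$; since $s_0\in Argmax(p_2\circ\gamma_0)$, it is an admissible choice at $t=0$, and Fact \ref{prop var lungo gamma}$(2)$ yields $Var_{\gamma}(\gamma(s_0),\gamma(s_0))=0$. Hence $\Phi(0)=0$.

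To prove continuity, I introduce the auxiliary function
$$
\Psi:\R_+\times\T\to\R,\qquad \Psi(t,s):=t\,\text{Torsion}_t(f,\gamma(s),\gamma'(s))+Var_{\gamma}(\gamma(s_0),\gamma(s)).
$$
Joint continuity of $(\tau,s)\mapsto Df_{\tau}(\gamma(s))\gamma'(s)$ together with the essential uniqueness of continuous determinations of angle functions makes the first summand continuous in $(t,s)$; the second is continuous in $s$ by the very definition of $Var_{\gamma}$. Hence $\Psi$ is continuous on $\R_+\times\T$. By Lemma \ref{Phi non dipende da sn per sn curva zero torsion}, whenever $s_t\in Argmax(p_2\circ\gamma_t)$ one has $\Phi(t)=\Psi(t,s_t)$. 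Now fix $t_0\in\R_+$ and let $t_n\to t_0$ with a choice $s_{t_n}\in Argmax(p_2\circ\gamma_{t_n})$. By compactness of $\T$, up to a subsequence $s_{t_n}\to s^{*}$; joint continuity of $(t,s)\mapsto p_2\circ\gamma_t(s)$ and continuity of $M^h_{\gamma}$ imply $s^{*}\in Argmax(p_2\circ\gamma_{t_0})$, so
$$
\Phi(t_n)=\Psi(t_n,s_{t_n})\longrightarrow \Psi(t_0,s^{*})=\Phi(t_0),
$$
the last equality using once more Lemma \ref{Phi non dipende da sn per sn curva zero torsion}. Since every subsequential limit of $\Phi(t_n)$ equals $\Phi(t_0)$, $\Phi$ is continuous; being $\Z$-valued on the connected set $\R_+$ and vanishing at $0$, it is identically zero.

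The main subtlety to watch is that $Argmax(p_2\circ\gamma_t)$ need not depend continuously on $t$, so there is in general no continuous section $t\mapsto s_t$; this is exactly why Lemma \ref{Phi non dipende da sn per sn curva zero torsion} is indispensable. It allows me, at the limit instant $t_0$, to select the specific cluster point $s^{*}$ produced by the converging subsequence rather than a preassigned element of $Argmax(p_2\circ\gamma_{t_0})$, and that freedom is precisely what turns $\Psi(t_n,s_{t_n})\to\Psi(t_0,s^{*})$ into the desired statement $\Phi(t_n)\to\Phi(t_0)$.
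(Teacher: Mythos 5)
Your proof is correct and follows essentially the same route as the paper's: both reduce the statement to showing $\Phi$ is continuous (given it is $\Z$-valued and $\Phi(0)=0$), and both establish continuity by combining the joint continuity of $(t,s)\mapsto t\,\text{Torsion}_t(f,\gamma(s),\gamma'(s))+Var_\gamma(\gamma(s_0),\gamma(s))$ with the closedness of the set $\{(s,t):s\in Argmax(p_2\circ\gamma_t)\}$ and Lemma~\ref{Phi non dipende da sn per sn curva zero torsion}. The only cosmetic difference is that the paper packages this compactness argument as ``the graph of $\Phi_{|[0,1]}$ is the continuous image of a compact set, hence $\Phi_{|[0,1]}$ is continuous,'' while you run the equivalent sequential argument (extract a convergent subsequence of the $s_{t_n}$, pass to the limit, invoke the sub-subsequence principle).
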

\begin{proof}
	If we show that $\Phi$ is continuous, then, since $\Phi$ takes values in $\Z$ and since $\Phi(0)=0$, we conclude that $\Phi$ is the constant zero function.\\
	\noindent Consider the function $\Phi_{\vert[0,1]}:[0,1]\rightarrow\R$. If its graph is compact, then $\Phi_{\vert[0,1]}$ is continuous.\\
	\noindent Denote for any $t\in[0,1]$
	$$
	\mathbb{K}_t=\{ s\in\mathbb{T} :\ s\in Argmax(p_2\circ\gamma_t) \}\times\{t\}
	$$
	and
	$$
	\mathbb{K}=\bigcup_{t\in[0,1]}\mathbb{K}_t=\bigcup_{t\in[0,1]}\{ (s,t) :\ s\in Argmax(p_2\circ\gamma_t) \}\subset \mathbb{T}\times[0,1].
	$$
	\noindent The set $\mathbb{K}$ is bounded. Let $(s_n,t_n)_{n\in\N}\subset\mathbb{K}$ be a sequence converging to $(s,t)$. The sequence $(t_n)_{n\in\N}\subset [0,1]$ converges to $t\in[0,1]$. Moreover, $s\in Argmax(p_2\circ\gamma_t)$. That is, $\mathbb{K}$ is closed. Consequently, $\mathbb{K}$ is compact. Consider now the function
	$$
	\mathbb{K}\ni(s,t)\mapsto (t, t\,\text{Torsion}_t(f,\gamma(s),\gamma'(s))+Var_{\gamma}(\gamma(s_0),\gamma(s)))\in[0,1]\times\R.
	$$
	It is continuous and, since $\mathbb{K}$ is compact, its image is compact too. Observe that its image is actually the graph of the function $\Phi_{\vert[0,1]}$. Thus $\Phi_{\vert[0,1]}$ is continuous.\\
	\noindent Using the same argument, we deduce that the function $\Phi$ is continuous on every compact $[0,n]$ for $n\in\N^*$. Consequently, the function $\Phi:\R_+\rightarrow\Z$ is continuous. This concludes the proof.\\
\end{proof}
\noindent\textit{Proof of Proposition \ref{prop building sequence gamma gen}.} Fix $n\in\N^*$ and let $s_n\in Argmax(p_2\circ\gamma_n)$. By Lemma \ref{Phi non dipende da sn per sn curva zero torsion} the value $\Phi(n)$ does not depend on the element of $Argmax(p_2\circ\gamma_n)$ and by Lemma \ref{Phi e costante nulla} the function $\Phi$ is the constant zero function. Therefore
$$
\Phi(n)=n\,\text{Torsion}_n(f,\gamma(s_n),\gamma'(s_n))+Var_{\gamma}(\gamma(s_0),\gamma(s_n))=0.
$$
Thus
$$
\abs{n\,\text{Torsion}_n(f,\gamma(s_n),\gamma'(s_n))}=\abs{Var_{\gamma}(\gamma(s_0),\gamma(s_n))}\leq C(\gamma),
$$
that is $z(n):=\gamma(s_n)\in\gamma(\mathbb{T})$ is the required point.\\
\hfill\qed
\subsection{A partial result on $\mathcal{C}^0$ essential curves}\label{subsection3}
\indent It seems natural wondering if Theorem \ref{teo 1} can be generalised to continuous essential curves. We provide here a partial result.
\begin{teorema}\label{teo c0 graph}
	Let $f:\A\rightarrow\A$ be a negative-torsion map. Let $\gamma:\T\rightarrow\A$ be a $\mathcal{C}^0$ essential curve such that $\gamma(\T)$ is the graph of a function. Then there exists a point $z\in\gamma(\T)$ of zero torsion.
\end{teorema}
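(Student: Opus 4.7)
The plan is to approximate the $\mathcal{C}^0$ graph $\gamma$ by $\mathcal{C}^1$ graphs, exploit a uniform bound on the complexity that is specific to graphs, apply Proposition \ref{prop building sequence gamma gen} to each approximation, and then pass to the limit. Writing $\gamma(\T)=\{(\theta,u(\theta)):\theta\in\T\}$ for some $u\in\mathcal{C}^0(\T,\R)$, choose $\mathcal{C}^1$ functions $u_k:\T\to\R$ (for instance by convolution with a smooth mollifier) such that $u_k\to u$ uniformly, and set $\gamma_k(s):=(s,u_k(s))$. Then each $\gamma_k$ is a $\mathcal{C}^1$ essential graph curve and $\gamma_k\to\gamma$ uniformly.

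The key new ingredient is the uniform bound $C(\gamma_k)\leq\frac{1}{2}$. Indeed, the tangent vector $\gamma_k'(s)=(1,u_k'(s))$ lies in the open right half-plane of $\R^2$, so the oriented angle from $\chi$ to $\gamma_k'(s)$ takes values in the open arc $(-\frac{1}{2},0)\subset\T$ of length $\frac{1}{2}$; any continuous determination in $\R$ is then confined to some open interval of length $\frac{1}{2}$ (a lift of this arc) and so has oscillation at most $\frac{1}{2}$. By Definition \ref{def complexity} this gives $C(\gamma_k)\leq\frac{1}{2}$, uniformly in $k$. Applying Proposition \ref{prop building sequence gamma gen} to $\gamma_k$, for every $n\in\N^*$ we obtain $s_n^{(k)}\in\T$ with $\abs{n\,\text{Torsion}_n(f,\gamma_k(s_n^{(k)}),\gamma_k'(s_n^{(k)}))}\leq\frac{1}{2}$, whence by Property \ref{indep vector}
$$
\abs{n\,\text{Torsion}_n(f,\gamma_k(s_n^{(k)}),\chi)}<1.
$$

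For each fixed $n$, a compactness argument extracts a subsequence with $s_n^{(k)}\to s_n\in\T$. Since $\gamma_k\to\gamma$ uniformly and $\gamma$ is continuous, $\gamma_k(s_n^{(k)})\to\gamma(s_n)$; continuity of the map $z\mapsto\text{Torsion}_n(f,z,\chi)$ then yields $\abs{n\,\text{Torsion}_n(f,\gamma(s_n),\chi)}\leq 1$. We are now in the setting reached at the end of the proof of Theorem \ref{teo 1}: let $s_\infty\in\T$ be a limit point of $(s_n)_{n\in\N^*}$ and set $K=\lfloor 2\rfloor+2=4$. For every fixed $N\in\N^*$, Proposition \ref{prop bounded before} applied to $\gamma(s_n)$ with $n>N$ gives $N\,\text{Torsion}_N(f,\gamma(s_n),\chi)\in[-2,0)$; continuity of $\text{Torsion}_N(f,\cdot,\chi)$ together with Property \ref{order neg tors} (which forces strict negativity) propagates the bound to $s_\infty$, so $N\,\text{Torsion}_N(f,\gamma(s_\infty),\chi)\in[-2,0)$. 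Dividing by $N$ and letting $N\to\infty$ yields $\text{Torsion}(f,\gamma(s_\infty))=0$.

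The main obstacle is establishing the uniform complexity bound: a naive $\mathcal{C}^1$ approximation of a general $\mathcal{C}^0$ curve would produce a complexity $C(\gamma_k)$ controlled by $\norma{u_k'}_\infty$, which is not uniformly bounded as $k\to\infty$. The graph hypothesis is precisely what confines the tangent direction to a single open half-plane and delivers the uniform bound $C(\gamma_k)\leq\frac{1}{2}$; once this is in place, the proof follows the structure of Theorem \ref{teo 1} essentially verbatim, with the constant $C$ replaced by the explicit value $1$.
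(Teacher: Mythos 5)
Your proof is correct, and it takes a genuinely different route from the paper's.

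The paper proves Proposition \ref{prop bound c0} directly, without any approximation, by introducing the auxiliary notion of a $\widetilde{tilt}$ determination for $\mathcal{C}^1$ curves escaping to $\pm\infty$ (Definition \ref{def tilt}) and then establishing in Proposition \ref{lien tilt=tors} the identity $\text{Torsion}_1(f,z,\chi)=\widetilde{tilt}(f\circ V_z)(y)$ for the vertical line $V_z$ through $z=(x,y)$. Applying this to $f^n$ at the preimage $\gamma(s_n)$ of a maximal-height point of $f^n\circ\gamma(\T)$, and using the graph hypothesis to guarantee that the vertical through $\gamma(s_n)$ meets $\gamma(\T)$ exactly once, the paper obtains the sharper bound $\abs{n\,\text{Torsion}_n(f,\gamma(s_n),\chi)}\leq\frac14$. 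You instead avoid the tilt machinery entirely: you mollify the $\mathcal{C}^0$ graph into $\mathcal{C}^1$ graphs $\gamma_k$, observe that for a $\mathcal{C}^1$ graph the tangent direction is confined to one open half-plane, so the angle determination lives in a single open interval of length $\tfrac12$ and hence $C(\gamma_k)\leq\frac12$ uniformly, and then reuse Proposition \ref{prop building sequence gamma gen} and pass to the limit. Your approach yields the slightly weaker constant $1$ in place of $\frac14$, which is immaterial for Proposition \ref{prop bounded before}; what it buys is a proof that leans entirely on the machinery already built for the $\mathcal{C}^1$ case rather than introducing a new tool, at the cost of an extra (routine) limit in $k$. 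Both arguments converge on the same endgame, namely feeding the uniform finite-time bound into the argument at the end of the proof of Theorem \ref{teo 1}.

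One small point worth making explicit in your write-up: the passage $\gamma_k(s_n^{(k)})\to\gamma(s_n)$ uses both the uniform convergence $u_k\to u$ and the continuity of $u$, as you note; and the connectedness of $\R_+$ is what forces a continuous determination of an angle function valued in $(-\tfrac12,0)\subset\T$ to live in a single lifted interval $(m-\tfrac12,m)$, which is the crux of the uniform complexity bound.
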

\noindent The result follows from Proposition \ref{prop bounded before} and from the following
\begin{proposizione}\label{prop bound c0}
	Let $f:\A\rightarrow\A$ be a $\mathcal{C}^1$ diffeomorphism isotopic to the identity. Let $\gamma:\T\rightarrow\A$ be a continuous essential curve such that $\gamma(\T)$ is a graph. Let $n\in\N^*$. Then there exists $z(n)=\gamma(s_n)\in\gamma(\T)$ such that
	\begin{equation}
	\abs{n\text{Torsion}_n(f,\gamma(s_n),\chi)}\leq \dfrac{1}{4}.
	\end{equation}
\end{proposizione}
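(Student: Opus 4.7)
The plan is to first prove the bound for $\mathcal{C}^1$ graphs by refining the argument of Proposition \ref{prop building sequence gamma gen}, and then deduce the $\mathcal{C}^0$ case by mollifying the generating function and passing to a compactness limit.

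Suppose $\gamma$ is a $\mathcal{C}^1$ graph, written $\gamma(s)=(s,g(s))$. Since $\gamma'(s)=(1,g'(s))$ has strictly positive first coordinate for every $s$, the continuous determination $\tilde{\Theta}(\gamma,\cdot)$ of the angle between $\chi$ and $\gamma'$ stays in the open half-turn interval $(-\tfrac12,0)$ and equals $-\tfrac14$ at any maximum-height point $s_0$ of $g$; in particular $C(\gamma)\le\tfrac14$. Apply Proposition \ref{prop building sequence gamma gen} to obtain $s_n\in Argmax(p_2\circ\gamma_n)$, and choose the continuous lift in $t$ to match the $\gamma$-lift at $s_n$; unwinding Lemma \ref{Phi e costante nulla} ($\Phi(n)=0$) yields
$$
\tilde v(I,\gamma(s_n),\gamma'(s_n))(n)\;=\;\tilde v(I,\gamma(s_n),\gamma'(s_n))(0)+n\,\text{Torsion}_n(f,\gamma(s_n),\gamma'(s_n))\;=\;-\tfrac14.
$$
Now compare with $\chi$: the initial oriented angle between $\gamma'(s_n)$ and $\chi$ lies in $(0,\tfrac12)$. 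Because $Df_t(\gamma(s_n))\gamma'(s_n)$ and $Df_t(\gamma(s_n))\chi$ are linearly independent for every $t$ (the differential of $f_t$ is invertible), the continuous determination of the oriented angle between them cannot cross $0$ or $\tfrac12$, hence remains in $(0,\tfrac12)$ for all $t$; this is a sharpening of Property \ref{order vector angle} specific to the present situation. Combined with the display above, this gives $\tilde v(I,\gamma(s_n),\chi)(n)\in(-\tfrac14,\tfrac14)$, and consequently $\lvert n\,\text{Torsion}_n(f,\gamma(s_n),\chi)\rvert<\tfrac14$ in the $\mathcal{C}^1$ graph case.

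For $\gamma$ only $\mathcal{C}^0$, mollify $g$ to obtain smooth periodic functions $g_k$ with $g_k\to g$ uniformly, and set $\gamma_k(s)=(s,g_k(s))$. Each $\gamma_k$ is a $\mathcal{C}^1$ essential graph, so the previous paragraph provides $s_{n,k}\in\T$ with $|n\,\text{Torsion}_n(f,\gamma_k(s_{n,k}),\chi)|<\tfrac14$. Extract a subsequence with $s_{n,k_j}\to s_n$; uniform convergence forces $\gamma_{k_j}(s_{n,k_j})\to\gamma(s_n)$, and continuous dependence of $(z,t)\mapsto\tilde v(I,z,\chi)(t)$ on its base point lets us pass to the limit, yielding $|n\,\text{Torsion}_n(f,\gamma(s_n),\chi)|\le\tfrac14$. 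The delicate part is the $\mathcal{C}^1$ step, where one must track compatible continuous lifts both along $\gamma$ and along the isotopy to pin down the terminal value $-\tfrac14$ and then exploit the orientation-preservation of $Df_t$; the passage to the $\mathcal{C}^0$ limit is then routine.
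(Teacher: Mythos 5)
Your proposal is correct, but it proceeds by a genuinely different route than the paper. The paper's proof is a one-step, direct argument that handles the $\mathcal{C}^0$ case immediately: it introduces the auxiliary quantity $\widetilde{tilt}$ (Definition~\ref{def tilt}), proves Proposition~\ref{lien tilt=tors} showing $\text{Torsion}_1(f,z,\chi)=\widetilde{tilt}(f\circ V_z)(y)$, and then observes that because $\gamma(\T)$ is a graph, the vertical ray $\{V_{\gamma(s_n)}(y):y<y_n\}$ is disjoint from $\gamma(\T)$, so its $f^n$-image cannot reach the maximal height of $f^n\circ\gamma(\T)$; by the normalization of $\widetilde{tilt}$ this forces $\widetilde{tilt}(f^n\circ V_{\gamma(s_n)})(y_n)\in[-\tfrac14,\tfrac14]$, and the proposition applied to $f^n$ gives the bound. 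No smoothness of $\gamma$ is ever needed. Your argument instead first establishes the $\mathcal{C}^1$ case by specializing Proposition~\ref{prop building sequence gamma gen}: you notice that for a $\mathcal{C}^1$ graph $C(\gamma)\le\tfrac14$ with the lift pinned at $-\tfrac14$ at the height maximum, unwind $\Phi(n)=0$ to get $\tilde v(I,\gamma(s_n),\gamma'(s_n))(n)=-\tfrac14$, and then replace $\gamma'(s_n)$ by $\chi$ not via the lossy Property~\ref{indep vector} (which would only give a $\tfrac{1}{2n}$ error) but by a direct linear-independence argument that keeps the angle between $Df_t\gamma'(s_n)$ and $Df_t\chi$ trapped in $(0,\tfrac12)$; you then pass to $\mathcal{C}^0$ by mollifying $g$ and using continuity of finite-time torsion in the base point. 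Both routes are valid. The paper's approach is shorter once the $\widetilde{tilt}$ machinery is in place and makes the role of the single-intersection property transparent; your approach avoids $\widetilde{tilt}$ entirely and exhibits the proposition as a sharpened corollary of the $\mathcal{C}^1$ machinery already built in Subsection~\ref{subsection 2}, at the cost of an additional approximation-and-limit step. One small point worth making explicit in your writeup: the compatibility of lifts needed to get exactly $-\tfrac14$ (namely, choosing $\tilde v(I,\gamma(s_n),\gamma'(s_n))(0)=\tilde\Theta(\gamma,S_0)(S_n-S_0)$ and $\tilde\Theta(\gamma,S_0)(0)=-\tfrac14$) should be stated rather than compressed into the phrase ``match the $\gamma$-lift.''
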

\noindent For the proof of Proposition \ref{prop bound c0} we need to introduce the notion of \textit{tilt} angle variation.
\begin{definizione}\label{def tilt}
	Let $\psi:\R\rightarrow\A$ be a $\mathcal{C}^1$ embedded curve such that $\lim_{t\rightarrow\pm\infty}p_2\circ\gamma(t)=\pm\infty$.\\
	\noindent The angle function $tilt(\gamma)$ is defined by
	$$
	\R\ni t\mapsto tilt(\gamma)(t):=\theta(\chi,\gamma'(t))\in\T,
	$$
	where $\theta(u,v)$ denotes the oriented angle between the non-zero vectors $u,v$.\\
	\noindent Let $\widetilde{tilt}(\gamma):\R\rightarrow\R$ be the continuous determination of the angle function $tilt(\gamma)$ such that if $t\in\R$ is such that
	$$
	p_2\circ\gamma(t)>p_2\circ\gamma(s)\quad\forall s<t,
	$$
	then $\widetilde{tilt}(\gamma)(t)\in[-\frac{1}{4},\frac{1}{4}]$.
\end{definizione}
\begin{notazione}
	For every $z\in\A$ denote as
	$$
	\R\ni t\mapsto V_z(t)=(p_1(z),t)\in\A
	$$
	the vertical line passing through $z$.
\end{notazione}
\noindent We can calculate the finite-time torsion looking at the continuous determination $\widetilde{tilt}$.
\begin{proposizione}\label{lien tilt=tors}
	Let $f:\A\rightarrow\A$ be a $\mathcal{C}^1$ diffeomorphism isotopic to the identity. Then for any $z=(x,y)\in\A$ it holds
	\begin{equation}
	\text{Torsion}_1(f,z,\chi)=\widetilde{tilt}(f\circ V_z)(y).
	\end{equation}
\end{proposizione}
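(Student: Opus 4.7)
The approach is to realize both sides of the identity as two slice-evaluations of a single continuous two-parameter lift obtained along an isotopy. Fix an isotopy $I=(f_t)_{t\in[0,1]}$ from the identity to $f$ and consider the family of tangent vectors $W(s,t):=Df_t(x,s)\chi\in T_{f_t(x,s)}\A$, which is continuous and nonzero on the simply connected rectangle $\R\times[0,1]$. The oriented angle $\Theta(s,t):=\theta(\chi,W(s,t))\in\T$ is therefore continuous, and since $Df_0=\text{Id}$ it satisfies $\Theta(\cdot,0)\equiv 0$. There is thus a unique continuous lift $\widetilde\Theta:\R\times[0,1]\to\R$ with $\widetilde\Theta(\cdot,0)\equiv 0$.

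Along the vertical slice at $s=y$, the map $t\mapsto\widetilde\Theta(y,t)$ is a continuous determination of $v(I,z,\chi)(\cdot)$ starting at $0$, so by definition $\widetilde\Theta(y,1)=\text{Torsion}_1(f,z,\chi)$. Along the horizontal slice at $t=1$, since $(f\circ V_z)'(s)=W(s,1)$, the map $s\mapsto\widetilde\Theta(s,1)$ is a continuous lift of the angle function $tilt(f\circ V_z)$. It then remains to identify $\widetilde\Theta(\cdot,1)$ with $\widetilde{tilt}(f\circ V_z)$: being continuous lifts of the same $\T$-valued function, they differ by an integer constant $k$, which I would show vanishes by matching the normalization of Definition \ref{def tilt} at some strict prefix maximum of $p_2\circ(f\circ V_z)$.

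To pin down $k=0$, I would use a homotopic-tracking argument. At $t=0$ every point of $V_z$ is a strict prefix maximum and $\widetilde\Theta(\cdot,0)\equiv 0\in[-1/4,1/4]$. Each $f_t$ preserves the two ends of $\A$, so $p_2\circ(f_t\circ V_z)(s)\to\pm\infty$ as $s\to\pm\infty$ and the curve admits strict prefix maxima for every $t$. I would then construct a continuous selection $t\mapsto s^*_t$ of strict prefix maxima with $s^*_0$ chosen arbitrarily; along this selection, $W(s^*_t,t)$ has non-negative second coordinate, so $\widetilde\Theta(s^*_t,t)\bmod\Z\in[-1/4,1/4]$. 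Combined with $\widetilde\Theta(s^*_0,0)=0$ and continuity in $t$, this forces $\widetilde\Theta(s^*_t,t)$ to remain in $[-1/4,1/4]$ for every $t\in[0,1]$, since a continuous real-valued function taking values in $\bigcup_{k\in\Z}[k-1/4,k+1/4]$ and starting inside $[-1/4,1/4]$ cannot jump to an adjacent component without crossing the forbidden gap $(1/4,3/4)$. Evaluating at $t=1$ yields $k=0$, and the identity follows.

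The principal obstacle is precisely the continuous tracking of a strict prefix maximum through the isotopy: prefix maxima are not generically stable under perturbations, as new records may appear and existing ones may cease to be records. Circumventing this difficulty---via a compactness argument on a bounded region of $\R\times[0,1]$ in which the relevant prefix maxima live, or via a selection argument exploiting the $\mathcal{C}^1$ dependence of $f_t\circ V_z$ on $t$---is the technical heart of the proof.
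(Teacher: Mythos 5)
Your proposal correctly identifies the crux of the problem: both $\text{Torsion}_1(f,z,\chi)$ and $\widetilde{tilt}(f\circ V_z)(y)$ are measures of the same oriented angle, so they differ by an integer constant $k$, and the whole effort goes into showing $k=0$. But the mechanism you propose for pinning down $k$ --- continuously tracking a strict prefix maximum $s^*_t$ of $p_2\circ(f_t\circ V_z)$ along the isotopy and arguing the lift $\widetilde\Theta(s^*_t,t)$ never leaves $[-1/4,1/4]$ --- is exactly where your argument breaks, and you acknowledge this yourself. Prefix maxima are not stable: records can appear and disappear under arbitrarily small perturbations, the set of strict prefix maxima need not be connected, and there is no reason to expect a continuous (or even upper-semicontinuous) selection. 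Without such a selection the interval-trapping argument has nothing to run on, so as written the proof is incomplete.

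The paper sidesteps this difficulty entirely by \emph{not} tracking through the isotopy. Instead, it observes that $\Psi(z):=\text{Torsion}_1(f,z,\chi)-\widetilde{tilt}(f\circ V_z)(p_2(z))$ is continuous in $z$ over all of $\A$ and $\Z$-valued, hence globally constant; one then needs to exhibit a \emph{single} $z$ where $\Psi(z)=0$. The point is chosen at time $t=1$ only, with no tracking: take $\bar z=(x,0)\in\T\times\{0\}$ mapping under $f$ to a point of maximal height of $f(\T\times\{0\})$. By the earlier machinery (Lemmas \ref{Phi non dipende da sn per sn curva zero torsion} and \ref{Phi e costante nulla} applied to the zero-complexity curve $\T\times\{0\}$) one gets $\text{Torsion}_1(f,\bar z,\mathcal H)=0$, and Property \ref{indep vector} then gives $\abs{\text{Torsion}_1(f,\bar z,\chi)}<1/2$. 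A short topological argument (using that $f$ is a homeomorphism preserving the two ends, so $f(\T\times\{0\})$ cannot cross $f(\{(x,\xi):\xi<0\})$) shows that $\bar z$ \emph{is} a strict prefix maximum for $p_2\circ(f\circ V_{\bar z})$, so by Definition \ref{def tilt} the tilt lift at $\bar z$ lies in $[-1/4,1/4]$. An integer of absolute value strictly less than $1$ is zero, and one concludes. So your overall skeleton is right, but to close the gap you should replace your isotopy-tracking of prefix maxima with this static argument: pass continuity to the variable $z\in\A$ rather than to the isotopy parameter $t$, then choose the one point where both quantities are controllable simultaneously.
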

\begin{proof}
	Recall that the time-one torsion at $z\in\mathbb{A}$ with respect to the vertical vector $\chi$ is
	$$
	\tilde{v}(f,z,\chi)(1)-\tilde{v}(f,z,\chi)(0),
	$$
	where $t\mapsto \tilde{v}(f,z,\chi)(t)$ is a lift of the oriented angle function $t\mapsto \theta(\chi,Df_t(z)\chi)$. Consider $\widetilde{tilt}(f\circ V_z)$. Observe that both $\tilde{v}(f,z,\chi)(1)$ and $\widetilde{tilt}(f\circ V_z)(y)$ are measures of the same oriented angle $\theta(\chi,Df(z)\chi)$.\\
	\noindent The continuous function
	$$
	\mathbb{A}\ni z=(x,y)\mapsto \Psi(z):= \text{Torsion}_1(f,z,\chi)-\widetilde{tilt}(f\circ V_z)(y)\in\R
	$$
	takes value in $\Z$. Thus, it is constant.\\
	\noindent We are going to exhibit a point $z\in\mathbb{A}$ such that $\Psi(z)=0$. Thus we will conclude that for any $(x,y)\in\A$
	\begin{equation}\label{torsion 1 tilt unbounded annulus}
	\text{Torsion}_1(f,(x,y),\chi)=\widetilde{tilt}(f\circ V_{(x,0)})(y).
	\end{equation}
	\noindent Consider the $\mathcal{C}^1$ essential curve $\mathbb{T}\times\{0\}$ and its image $f(\mathbb{T}\times\{0\})$. The complexity of the curve $C(\T\times\{0\})$ is clearly zero, see Definition \ref{def complexity}.\\
	\noindent Let $\bar{z}=(x,0)\in\mathbb{T}\times\{ 0 \}$ correspond to a point of maximal height of $f(\T\times\{0\})$, that is
	$$
	p_2\circ f(\bar{z})=\max_{\xi\in\T\times\{0\}}p_2\circ f(\xi).
	$$
	\noindent By Lemmas \ref{Phi non dipende da sn per sn curva zero torsion} and \ref{Phi e costante nulla}, it holds that $\text{Torsion}_1(f,\bar{z},\mathcal{H})=0$. By Property \ref{indep vector} we have
	\begin{equation}\label{una spiegazione tilt unbounded}
	\abs{\text{Torsion}_1(f,\bar{z},\chi)}=\abs{\text{Torsion}_1(f,\bar{z},\chi)-\text{Torsion}_1(f,\bar{z},\mathcal{H})}<\dfrac{1}{2}.
	\end{equation}
	We now show that the point $\bar{z}=(x,0)\in\mathbb{T}\times\{0\}$ is such that for any $s<0$ it holds
		$$
		p_2\circ f(x,0)>p_2\circ f(x,s).$$ \noindent Indeed if by contradiction there exists $\hat{s}<0$ such that $p_2\circ f(x,\hat{s})\geq p_2\circ f(x,0)$, then, since $f(x,0)$ is a point of maximal height of $f(\T\times\{0\})$ and since $f$ preserves the boundaries, the curve $f(\T\times\{0\})$ should intersect the curve $\{f(x,\xi) :\ \xi\leq \hat{s}\}$. This contradicts the fact that
		$
		(\mathbb{T}\times\{0\})\cap \{(x,\xi) :\ \xi<0\}
		$
		is empty and that $f$ is a diffeomorphism.\\
		\noindent Consequently, from the definition of $\widetilde{tilt}$ (see Definition \ref{def tilt}) we have
		\begin{equation}\label{due spiegazioni tilt unbounded}
		\widetilde{tilt}(f\circ V_x)(0)\in\left[-\frac{1}{4},\frac{1}{4}\right].
		\end{equation}
		\noindent Look now at $\text{Torsion}_1(f,(x,0),\chi)$. Choose the continuous determination such that $\tilde{v}(f,(x,0),\chi)(0)=0$. Both $\tilde{v}(f,(x,0),\chi)(1)$ and $\widetilde{tilt}(f\circ V_x)(0)$ are measure of the same angle. Thus$$\tilde{v}(f,(x,0),\chi)(1)-\widetilde{tilt}(f\circ V_x)(0)=\text{Torsion}_1(f,(x,0),\chi)-\widetilde{tilt}(f\circ V_x)(0)\in \Z.$$
		\noindent From \eqref{una spiegazione tilt unbounded} and from \eqref{due spiegazioni tilt unbounded}, we have that
		$$
		\text{Torsion}_1(f,(x,0),\chi)=\widetilde{tilt}(f\circ V_x)(0),
		$$
		concluding so the proof.\\
\end{proof}

\noindent \textit{Proof of Proposition \ref{prop bound c0}.} Fix $n\in\N^*$. Consider $f^n\circ\gamma(\T)$ and let $s_n\in\T$ be such that
$$
p_2\circ f^n\circ\gamma(s_n)=\max_{s\in\T}p_2\circ f^n\circ\gamma(s).
$$
\noindent Denote as $V_{\gamma(s_n)}$ the vertical line passing through $\gamma(s_n)$. Denote $\gamma(s_n)=V_{\gamma(s_n)}(y_n)$. Since $\gamma(\T)$ is a graph, $V_{\gamma(s_n)}$ intersects $\gamma(\T)$ only once.\\
\noindent Consequently for every $y<y_n$ it holds
$$
p_2\circ f^n\circ V_{\gamma(s_n)}(y)<p_2\circ f^n\circ V_{\gamma(s_n)}(y_n)=p_2\circ f^n\circ\gamma(s_n)
$$
because $f^n\circ\gamma(s_n)$ is a point of maximal height of $f^n\circ\gamma(\T)$ and because $f^n$ preserves the boundaries (otherwise we would have another point of intersection between $\gamma(\T)$ and $V_{\gamma(s_n)}(\R)$).\\
\noindent By the definition of the continuous determination $\widetilde{tilt}(V_{\gamma(s_n)})$, see Definition \ref{def tilt}, we have $\widetilde{tilt}(V_{\gamma(s_n)})(y_n)\in[-\frac{1}{4},\frac{1}{4}]$.\\
\noindent By Proposition \ref{lien tilt=tors} we conclude that
$$
\text{Torsion}_1(f^n,\gamma(s_n),\chi)=n\text{Torsion}_n(f,\gamma(s_n),\chi)\in\left[-\dfrac{1}{4},\dfrac{1}{4}\right].
$$
\hfill\qed

\section{A Birkhoff's theorem through torsion}\label{Birkhoff result}

\indent Using the tool of torsion, we can prove a Birkhoff's-theorem-like result (see \cite{Birk22} and \cite{Her83}) in a different hypothesis framework, see Theorem \ref{teo 2}. The idea of using the torsion in order to prove a Birkhoff's-theorem-like result was already present in the work of M.~Bialy and L.~Polterovich (see \cite{BiaPol89}, \cite{Pol} and \cite{BiaPol}). This result arises from a question by V.~Humiliére.\\
\noindent On one hand we do not require that $f$ is either a twist map or a conservative map. On the other hand $f$ has to be a negative-torsion (positive-torsion) map and we require that the dynamics restricted to the $\mathcal{C}^1$ curve is non-wandering (see \cite{katok}).
\begin{lemma}\label{lemma graph cond}
	Let $\gamma$ be a $\mathcal{C}^1$ essential curve. If $\gamma$ is transversal to the vertical at every point, then $\gamma$ is the graph of a function.
\end{lemma}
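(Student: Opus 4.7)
The plan is to reduce the statement to showing that the first-coordinate projection $\pi := p_1 \circ \gamma : \T \to \T$ is a $\mathcal{C}^1$ diffeomorphism. Once this is done, the function $u := p_2 \circ \gamma \circ \pi^{-1} : \T \to \R$ is $\mathcal{C}^1$ and satisfies $\gamma(\T) = \{(x, u(x)) : x \in \T\}$, which is exactly the conclusion that $\gamma(\T)$ is the graph of a function.

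First, I would exploit the transversality hypothesis. Transversality of $\gamma$ to the vertical direction $\R\chi$ at every point means precisely that $\pi'(s) = p_1(\gamma'(s)) \neq 0$ for every $s \in \T$. Hence $\pi$ is a $\mathcal{C}^1$ immersion from the compact connected $1$-manifold $\T$ into $\T$, and by standard differential topology of $1$-manifolds this forces $\pi$ to be a $\mathcal{C}^1$ covering map of $\T$.

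Second, I would identify the degree of $\pi$. Since $\gamma$ is essential, the remark in Subsection \ref{subsection 2} tells us that $\gamma$ is isotopic in $\A$ to either $\mathbf{c}_1 : t \mapsto (t,0)$ or $\mathbf{c}_{-1} : t \mapsto (-t,0)$. Composing the isotopy with $p_1$ yields a homotopy between $\pi$ and $t \mapsto \pm t$, which has degree $\pm 1$. A covering map of $\T$ of degree $\pm 1$ is a bijection, and since $\pi$ is additionally a local $\mathcal{C}^1$ diffeomorphism (nonzero derivative everywhere), $\pi$ is a genuine $\mathcal{C}^1$ diffeomorphism. This completes the reduction sketched in the first paragraph.

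The main (and essentially only) delicate point is the passage from \emph{essential curve} to \emph{$\pi$ has degree $\pm 1$}: it uses the homotopy invariance of the degree together with the characterisation of the isotopy class of essential curves recalled in Subsection \ref{subsection 2}. Everything else is routine $1$-dimensional differential topology, so I do not expect any serious obstacle beyond keeping track of the two possible signs of the degree.
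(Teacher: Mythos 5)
Your proof is correct, and the overall strategy is the same as the paper's: reduce to showing that the first-coordinate projection composed with $\gamma$ is a diffeomorphism, then take the inverse to produce the graphing function. Where you diverge is in the technical route. The paper works in the universal cover: it lifts $\gamma$ to $\Gamma:\R\to\R^2$, observes that $p_1\circ\Gamma$ has nonvanishing derivative hence is strictly monotone, uses the essentialness of $\gamma$ to deduce the equivariance relation $p_1\circ\Gamma(t+1)=p_1\circ\Gamma(t)+1$, and concludes that $p_1\circ\Gamma:\R\to\R$ is a surjective strictly monotone $\mathcal{C}^1$ map with nonvanishing derivative, hence a diffeomorphism, which then descends to $\T$. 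You instead stay on the circle and argue via covering-space and degree theory: transversality makes $\pi=p_1\circ\gamma:\T\to\T$ a local $\mathcal{C}^1$ diffeomorphism, compactness of $\T$ upgrades this to a covering map, and the homotopy class of an essential curve forces the degree to be $\pm 1$, so $\pi$ is a one-sheeted covering, i.e.\ a diffeomorphism. Both arguments are sound and at a comparable level; the paper's is slightly more self-contained (no appeal to the classification of coverings of $\T$ or to degree theory, only to the intermediate value theorem and elementary properties of the lift), while yours is a touch more conceptual and makes the role of the degree $\pm 1$ condition explicit. The one point worth spelling out in your sketch is the passage ``immersion of compact $1$-manifolds $\Rightarrow$ covering map'': this is the standard fact that a proper local diffeomorphism onto a connected manifold is a covering, with properness coming from compactness of the source.
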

\begin{proof}
	Let $\Gamma:\R\rightarrow\R^2$ be a lift of $\gamma$. Consider the $\mathcal{C}^1$ function $p_1\circ\Gamma:\R\rightarrow\R$. Since $\Gamma$ is transversal to the vertical at every point, it holds $D(p_1\circ\Gamma)(t)\neq 0$ for every $t\in\R$. Without loss of generality assume that it is positive at every $t\in\R$. Thus, $p_1\circ\Gamma$ is an increasing diffeomorphism to its image.\\
	\noindent Since $\Gamma$ is a lift of an essential curve and since $p_1\circ\Gamma$ is increasing, we have that for every $t\in\R$
	$$
	p_1\circ\Gamma(t+1)=p_1\circ\Gamma(t)+1.
	$$
	In particular, we deduce that $p_1\circ\Gamma(\R)=\R$. That is $p_1\circ\Gamma$ is a $\mathcal{C}^1$ diffeomorphism. Denote $\phi=(p_1\circ\Gamma)^{-1}$. Consequently, the $\mathcal{C}^1$ function $\R\ni s\mapsto p_2\circ\Gamma\circ\phi(s)\in\R$ is such that its graph is $\Gamma(\R)$.\\
	\noindent The function $p_2\circ\Gamma\circ\phi$ is $1$-periodic. Thus, its projection on the annulus is well-defined and $\gamma(\T)$ is the graph of the $\mathcal{C}^1$ function $\psi:\T\rightarrow\R$ such that $\psi\circ p=p_2\circ\Gamma\circ\phi$, where $p:\R\rightarrow\T$ is the covering map of $\T$.
\end{proof}
\noindent We will now provide an upper bound of the $N$-finite time torsion along the curve $\gamma$. The bound is independent from $N$.
\begin{notazione}
	Let $x\in\A$ and let $\delta\in(0,\frac{1}{4})$. Denote
	$$
	C(x,\chi,\delta):=\left\{ v\in T_x\A :\ \theta(\chi,v)\text{ or }\theta(-\chi,v)\text{ admits a measure in }(-\delta,\delta) \right\}.
	$$
\end{notazione}
\begin{lemma}\label{lemma bound torsion}
Let $f:\A\rightarrow\A$ be a negative-torsion map and let $K$ be a compact $f$-invariant set. There exist $\varepsilon\in(0,\frac{1}{2})$ and $\delta\in(0,\frac{\varepsilon}{4})$ such that for any $x\in K$, for any $v\in C(x,\chi,\delta)$ and for any $n\in\N^*$ it holds
\begin{equation}
N\text{Torsion}_N(f,x,v)<-\dfrac{\varepsilon}{2}<0.
\end{equation}
\end{lemma}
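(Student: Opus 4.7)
The plan is to deduce the lemma by chaining three ingredients: a uniform one-step bound coming from compactness, an iterative use of Property \ref{order vector angle} pushing the bound to every integer time, and a continuity argument transferring the bound from $\chi$ to the whole cone $C(x,\chi,\delta)$.

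First, since $K\ni x\mapsto \text{Torsion}_1(f,x,\chi)$ is continuous and strictly negative, compactness of $K$ yields $\varepsilon_0>0$ such that $\text{Torsion}_1(f,x,\chi)<-\varepsilon_0$ for every $x\in K$; I set $\varepsilon:=\min(\varepsilon_0/2,1/3)\in(0,1/2)$. I would then prove by induction on $n\geq 1$ that $\tilde v(f,x,\chi)(n)<-\varepsilon_0$ for every $x\in K$. The base case is the one-step bound. For the inductive step, assuming $\tilde v(f,x,\chi)(n)<0$, the continuous lift at $f^n(x)$ associated to $Df^n(x)\chi$, extended from the global lift so that its initial value equals $\tilde v(f,x,\chi)(n)$, starts strictly below $0=\tilde v(f,f^n(x),\chi)(0)$. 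Property \ref{order vector angle} then gives
\[
\tilde v(f,x,\chi)(n+1)<\tilde v(f,f^n(x),\chi)(1)=\text{Torsion}_1(f,f^n(x),\chi)<-\varepsilon_0,
\]
closing the induction.

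For the transfer to the cone, I would use uniform continuity of $(x,v)\mapsto \text{Torsion}_1(f,x,v)$ on the unit tangent bundle over $K$ to pick $\delta\in(0,\varepsilon/4)$ so that $\text{Torsion}_1(f,x,v)<-\varepsilon_0/2$ for every $x\in K$ and every $v\in C(x,\chi,\delta)$. Since $\text{Torsion}_N(f,x,v)=\text{Torsion}_N(f,x,-v)$, I may assume $v$ makes an angle $\alpha\in(-\delta,\delta)$ with $\chi$. If $\alpha\leq 0$, Property \ref{order vector angle} yields $\tilde v(f,x,v)(N)\leq \tilde v(f,x,\chi)(N)<-\varepsilon_0$ for every $N\geq 1$, hence $N\,\text{Torsion}_N(f,x,v)<-\varepsilon_0+\delta<-\varepsilon/2$. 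If instead $\alpha\in(0,\delta)$, the one-step continuity bound gives $\tilde v(f,x,v)(1)=\alpha+\text{Torsion}_1(f,x,v)<\delta-\varepsilon_0/2<0$; re-running the previous induction starting from $n=1$ with $v$ in place of $\chi$ yields $\tilde v(f,x,v)(n)<-\varepsilon_0$ for every $n\geq 2$, and combined with $\text{Torsion}_1(f,x,v)<-\varepsilon_0/2$ this delivers $N\,\text{Torsion}_N(f,x,v)<-\varepsilon/2$ for every $N\geq 1$.

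The main obstacle I foresee is the sub-case $\alpha>0$, where Property \ref{order vector angle} orders $v$ above $\chi$ and so the direct time-$N$ comparison only produces a lower bound on $\tilde v(f,x,v)(N)$. The role of the quantitative constraint $\delta<\varepsilon/4\leq\varepsilon_0/4<\varepsilon_0/2$ is precisely to use the one-step continuity estimate to drive $\tilde v(f,x,v)(1)$ strictly below zero; once this is achieved, the inductive argument of the second step can be re-initiated at $n=1$ and the conclusion follows.
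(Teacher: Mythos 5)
Your argument is correct and uses the same two ingredients as the paper: the uniform one-step bound $\text{Torsion}_1(f,x,v)<-\varepsilon_0$ obtained from compactness of $K$ and continuity of $\text{Torsion}_1$, and Property \ref{order vector angle} used to drive a strictly negative lift value down by at least $\varepsilon_0$ at each further unit of time, exploiting the $f$-invariance of $K$. The organization differs slightly. The paper runs a single induction on $N$ whose inductive statement already quantifies over the whole cone $C(x,\chi,\delta)$, so there is no separate cone-transfer step: the inductive hypothesis puts $\tilde v(\cdot)(N-1)$ strictly below $0$ for \emph{every} $v$ in the cone, and the comparison with $\chi$ at $f^{N-1}(x)$ therefore always points the right way. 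You instead prove the lift bound for $\chi$ alone and then transfer to the cone via a case split on the sign of the initial angle $\alpha$; the sub-case $\alpha>0$ you single out as the obstruction is exactly what the paper's choice of inductive statement absorbs automatically, and your workaround (use the one-step cone bound to force $\tilde v(f,x,v)(1)<0$, then re-enter the $\chi$-iteration at the point $f(x)\in K$) is sound, with $\varepsilon=\min(\varepsilon_0/2,1/3)$ and $\delta<\varepsilon/4$ closing the estimates. The two proofs are the same in substance, differing only in whether the cone is carried inside the inductive hypothesis or handled in a separate step afterwards.
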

\begin{proof}
	Let us argue by induction. Since $f$ is a negative-torsion map, since $K$ is compact and by the continuity of the torsion at time 1, there exist $\varepsilon\in(0,\frac{1}{2})$ and $\delta\in(0,\frac{\varepsilon}{4})$ such that for every $x\in K$ and for every $v\in C(x,\chi,\delta)$ it holds
	\begin{equation}
	\text{Torsion}_1(f,x,v)<-\varepsilon<0.
	\end{equation}
	Assume now that the result holds for $N-1$. Let $x\in K$ and let $v\in C(x,\chi,\delta)$. Without loss of generality assume that the oriented angle $\theta(\chi,v)$ admits a measure in $(-\delta,\delta)$. The case of $\theta(-\chi,v)$ admitting a measure in $(-\delta,\delta)$ can be discussed similarly. Choose a continuous determination of the angle so that $\tilde{v}(I,x,v)(0)\in(-\delta,\delta)$. By inductive hypothesis it holds $$\tilde{v}(I,x,v)(N-1)<-\frac{\varepsilon}{2}+\delta<-\dfrac{\varepsilon}{4}.$$
	Consider now the continuous determination such that $\tilde{v}(I,f^{N-1}(x),\chi)(0)=0$. We point out the fact that we are considering a continuous determination with respect to a different point in $T_K\A$. In particular
	$$
	\tilde{v}(I,x,v)(N-1)<\tilde{v}(I,f^{N-1}(x),\chi)(0).
	$$
	From Property \ref{order vector angle}, from the choice of the continuous determinations and by the base case, we have that
	$$
	\tilde{v}(I,x,v)(N)<\tilde{v}(I,f^{N-1}(x),\chi)(1)=\tilde{v}(I,f^{N-1}(x),\chi)(1)-\tilde{v}(I,f^{N-1}(x),\chi)(0)<-\varepsilon.
	$$
	Consequently, by the choice of $\tilde{v}(I,x,v)(\cdot)$, we conclude that
	$$
	N\text{Torsion}_N(f,x,v)=\tilde{v}(I,x,v)(N)-\tilde{v}(I,x,v)(0)<-\varepsilon+\delta<-\dfrac{\varepsilon}{2}.
	$$
\end{proof}
\begin{lemma}\label{lemma tors=var}
Let $f:\A\rightarrow\A$ be a negative-torsion (positive-torsion) map. Let $\gamma:\T\rightarrow\A$ be a $\mathcal{C}^1$ $f$-invariant essential curve. Then for every $s\in\T$ and for every $N\in\N^*$ it holds
\begin{equation}
N\text{Torsion}_N(f,\gamma(s),\gamma'(s))=Var_{\gamma}(\gamma(s),\gamma(s_N)),
\end{equation}
where $f^N\circ\gamma(s)=\gamma(s_N)$.
\end{lemma}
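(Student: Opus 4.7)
The plan is to reduce the identity, for each fixed $N$, to the already-established vanishing of the function $\Phi$ from Lemma \ref{Phi e costante nulla}, by introducing an analogue that depends on the space parameter $s$ rather than on time. Precisely, for each $N\in\N^*$ set
$$
\Phi_N : \T \to \R, \qquad \Phi_N(s) := N\,\text{Torsion}_N(f,\gamma(s),\gamma'(s)) - Var_\gamma(\gamma(s), \gamma(s_N)),
$$
where $s_N$ is defined by $f^N\circ\gamma(s)=\gamma(s_N)$. The goal is to prove $\Phi_N\equiv 0$ by showing that $\Phi_N$ is continuous and $\Z$-valued, hence constant, and then pinning it down to zero at a well-chosen parameter.

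For integer-valuedness I would use that $f$ is isotopic to the identity, which forces the induced circle map $\phi_N:\T\to\T$, $s\mapsto s_N$, to be orientation-preserving; hence $Df^N(\gamma(s))\gamma'(s)=\lambda_N\gamma'(s_N)$ with $\lambda_N>0$, so that the continuous determinations $\tilde{v}(I,\gamma(s),\gamma'(s))(\cdot)$ and $\tilde{\Theta}(\gamma,S)(\cdot)$ at their respective endpoints measure the same oriented angles. The two differences defining $\Phi_N(s)$ then differ only by two integer ambiguities, so $\Phi_N(s)\in\Z$. Continuity in $s$ of the torsion part is standard. For the variation part, the only apparent obstruction is the convention $S_N\in(S,S+1]$ in the definition of $Var_\gamma$, which could produce jumps as $\phi_N(s)$ crosses the boundary of the fundamental domain. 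This obstruction disappears because every essential curve is isotopic to $\mathbf{c}_{\pm 1}$, whose tangent-direction map has degree $0$, so every continuous lift $\tilde{\Theta}$ of the tangent angle along $\gamma$ is $1$-periodic; consequently $Var_\gamma(\gamma(s),\gamma(\phi_N(s)))$ is unaffected by the integer shift implicit in the choice of lift of $\phi_N(s)$, and depends continuously on $s$.

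To pin down the constant value I would exploit the $f$-invariance of $\gamma$: since $\gamma_N(\T)=\gamma(\T)$, any $\bar{s}\in Argmax(p_2\circ\gamma_N)$ satisfies $\phi_N(\bar{s})=s_0$ for some maximal-height parameter $s_0$ of $\gamma$. Applying Lemmas \ref{Phi non dipende da sn per sn curva zero torsion} and \ref{Phi e costante nulla} at time $N$ then yields
$$
N\,\text{Torsion}_N(f,\gamma(\bar{s}),\gamma'(\bar{s})) + Var_\gamma(\gamma(s_0),\gamma(\bar{s})) = 0,
$$
and combining with the antisymmetry $Var_\gamma(\gamma(s_0),\gamma(\bar{s}))=-Var_\gamma(\gamma(\bar{s}),\gamma(s_0))$ from Fact \ref{prop var lungo gamma} gives $\Phi_N(\bar{s})=0$. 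Hence $\Phi_N\equiv 0$, which is the lemma. The main subtle step is the continuity verification, which requires carefully distinguishing the two lift conventions (one along the isotopy used for $\tilde{v}$, one along the curve used for $\tilde{\Theta}$) and invoking the $1$-periodicity of $\tilde{\Theta}$ to reconcile them.
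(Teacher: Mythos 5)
Your proof is correct, but it takes a genuinely different route from the paper's. The paper observes the time-$1$ relation $\text{Torsion}_1(f,\gamma(s),\gamma'(s))=Var_{\gamma}(\gamma(s),\gamma(s_1))+k$ with a single integer $k$ independent of $s$, chains it over $N$ iterates using $f$-invariance to obtain $N\text{Torsion}_N = Nk + Var_{\gamma}(\gamma(s),\gamma(s_N))$, and then invokes Theorem \ref{teo 1} (existence of a zero-torsion point on $\gamma$) plus the bound $\abs{Var_{\gamma}}\leq C(\gamma)$ to force $k=0$ by letting $N\to\infty$. You instead work directly at time $N$, defining $\Phi_N(s)$ and arguing it is a continuous $\Z$-valued function, hence constant, and then pin the constant to zero at a maximal-height parameter $\bar{s}$ of $\gamma_N$ by feeding it into the already-established Lemma \ref{Phi e costante nulla} (together with the freedom in the choice of base point $s_0$ from Remark \ref{complexity curve non dipende s0} and the antisymmetry of $Var_\gamma$). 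Your route is more self-contained within Subsection \ref{subsection 2}: it avoids any appeal to Theorem \ref{teo 1} or to the asymptotic argument, at the cost of a slightly more delicate continuity discussion for $\Phi_N$. The paper's route is shorter once Theorem \ref{teo 1} is in hand because the chaining argument reduces everything to the time-$1$ observation. Your justification that $Df^N(\gamma(s))\gamma'(s)$ is a \emph{positive} multiple of $\gamma'(s_N)$ (since $f$ isotopic to the identity forces the induced circle map to be orientation-preserving on any invariant essential curve) is exactly the right point, and the paper uses the same fact implicitly when asserting that both quantities are measures of the same oriented angle; you make it explicit, which is a clarity gain. The one point you leave a bit compressed is the final substitution: to pass from $N\,\text{Torsion}_N(f,\gamma(\bar{s}),\gamma'(\bar{s}))+Var_\gamma(\gamma(s_0),\gamma(\bar{s}))=0$ to $\Phi_N(\bar{s})=0$ you implicitly use both Proposition \ref{proposizione variazione curva punti altezza max} (so that replacing the reference maximal-height parameter $s_0$ by $\phi_N(\bar{s})$ changes nothing) and Fact \ref{prop var lungo gamma}(3); it is worth stating both invocations explicitly.
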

\begin{proof}
We start observing that for every $s\in\T$ both $\text{Torsion}_1(f,\gamma(s),\gamma'(s))$ and $Var_{\gamma}(\gamma(s),\gamma(s_1))$ are measures of the same oriented angle, where $\gamma(s_1)=f\circ\gamma(s)$. In particular there exists $k\in\Z$ such that for any $s\in\T$
\begin{equation}\label{eq tors=var}
\text{Torsion}_1(f,\gamma(s),\gamma'(s))=Var_{\gamma}(\gamma(s),\gamma(s_1))+k.
\end{equation}
\noindent The integer $k\in\Z$ does not depend on $s\in\T$.\\
\noindent By Theorem \ref{teo 1} there exists a point $\gamma(s_{\infty})\in\gamma(\T)$ such that $\text{Torsion}(f,\gamma(s_{\infty}))=0$.\\
\noindent At the same time, since $\gamma$ is $f$-invariant and from \eqref{eq tors=var}, we have that for any $N\in\N^*$
$$
N\text{Torsion}_N(f,\gamma(s_{\infty}),\gamma'(s_{\infty}))=Nk+\sum_{i=0}^{N-1}Var_{\gamma}(\gamma(s_i),\gamma(s_{i+1})=Nk+Var_{\gamma}(\gamma(s_{\infty}),\gamma(s_N)),
$$
where for every $i\in\llbracket 0,N\rrbracket$ we denote as $s_i\in\T$ the point such that $\gamma(s_i)=f^i\circ\gamma(s_{\infty})$.\\
\noindent Since $\gamma(s_{\infty})$ has zero torsion and since we have $\abs{Var_{\gamma}(\gamma(s_{\infty}),\gamma(s_N))}\leq C(\gamma)<+\infty$, we conclude that $k=0$.\\
\noindent In particular for every $s\in\T$ and every $N\in\N^*$ it holds
$$
N\text{Torsion}_N(f,\gamma(s),\gamma'(s))=Var_{\gamma}(\gamma(s),\gamma(s_N)),
$$
where $\gamma(s_N)=f^N\circ\gamma(s)$.
\end{proof}
\noindent An outcome of Lemma \ref{lemma tors=var} is the following corollary, already proved by S.~Crovisier for twist maps in \cite{crovisier}.
\begin{corollario}
	Let $f:\A\rightarrow\A$ be a negative-torsion (positive-torsion) map. Let $\gamma:\T\rightarrow\A$ be a $\mathcal{C}^1$ essential $f$-invariant curve on $\A$. Then, for any $s\in\T$ it holds $\text{Torsion}(f,\gamma(s))=0$.
\end{corollario}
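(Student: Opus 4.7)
The plan is to derive the corollary as a direct consequence of Lemma \ref{lemma tors=var} combined with the fact that the angle variation along a compact $\mathcal{C}^1$ essential curve is globally bounded. Fix $s \in \T$ and, for each $N \in \N^*$, let $s_N \in \T$ denote the unique point with $\gamma(s_N) = f^N \circ \gamma(s)$. Lemma \ref{lemma tors=var} then gives
\begin{equation}
N\,\text{Torsion}_N(f,\gamma(s),\gamma'(s)) = Var_\gamma(\gamma(s),\gamma(s_N)).
\end{equation}

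The main point is that the right-hand side is bounded uniformly in $N$. Using the additivity property of $Var_\gamma$ (Fact \ref{prop var lungo gamma}, item (3)) and fixing a point $s_0 \in \T$ of maximal height, I would write
$$Var_\gamma(\gamma(s),\gamma(s_N)) = Var_\gamma(\gamma(s_0),\gamma(s_N)) - Var_\gamma(\gamma(s_0),\gamma(s)),$$
and observe that both summands on the right are bounded in absolute value by the complexity $C(\gamma) < +\infty$ of the curve (Definition \ref{def complexity} together with Remark \ref{complexity curve non dipende s0}). Consequently $|Var_\gamma(\gamma(s),\gamma(s_N))| \leq 2 C(\gamma)$ for every $N \in \N^*$.

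Dividing by $N$, I conclude that $\text{Torsion}_N(f,\gamma(s),\gamma'(s)) \to 0$ as $N \to +\infty$. To pass from convergence along the tangent vector $\gamma'(s)$ to the intrinsic torsion $\text{Torsion}(f,\gamma(s))$ (which by Remark \ref{prop torsion} is independent of the vector used, when the limit exists), I would invoke Property \ref{indep vector}: for any $\xi \in T_{\gamma(s)}\A \setminus \{0\}$,
$$\abs{\text{Torsion}_N(f,\gamma(s),\gamma'(s)) - \text{Torsion}_N(f,\gamma(s),\xi)} < \frac{1}{2N} \xrightarrow{N\to\infty} 0,$$
so the limit exists for every choice of vector and equals $0$. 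This yields $\text{Torsion}(f,\gamma(s)) = 0$.

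I do not anticipate any genuine obstacle here: the substantive work was already carried out in Lemma \ref{lemma tors=var}, which combined Theorem \ref{teo 1} with the additivity of $Var_\gamma$ to identify the integer correction in the cocycle identity as zero. The present corollary is essentially a repackaging of that lemma together with the elementary observation that $Var_\gamma$ is bounded on $\gamma(\T)$; the positive-torsion case follows by the same argument (or by symmetry upon reversing orientation).
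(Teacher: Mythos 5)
Your argument is correct and is exactly the route the paper implicitly intends (it states the corollary as ``an outcome of Lemma \ref{lemma tors=var}'' without spelling out the proof). You combine Lemma \ref{lemma tors=var} with the additivity of $Var_\gamma$ and the complexity bound $C(\gamma)$ to get $\abs{N\text{Torsion}_N(f,\gamma(s),\gamma'(s))}\leq 2C(\gamma)$ uniformly in $N$, then pass to the limit and use Property \ref{indep vector} to dispense with the dependence on the tangent vector; this is precisely the chain of reasoning the author leaves to the reader.
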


\noindent We can now finally prove Theorem \ref{teo 2}.\vspace{7pt}

\noindent \textit{Proof of Theorem \ref{teo 2}.} Argue by contradiction and assume that $\gamma$ is not a graph. Then from Lemma \ref{lemma graph cond} there exists a point $z=\gamma(s)$ such that $\gamma'(s)\in\R\chi$. Denote
$$
\chi'=\begin{cases}
\chi\qquad\text{if }\gamma'(s)\in\R_+\chi, \\
\\
-\chi\qquad\text{if }\gamma'(s)\in\R_-\chi.
\end{cases}
$$
\noindent Let $\varepsilon\in(0,\frac{1}{2})$ and $\delta\in(0,\frac{\varepsilon}{2})$ be the parameters of Lemma \ref{lemma bound torsion} applied at the $f$-invariant compact set $\gamma(\T)$.\\
\noindent Let $U\subset \T$ be a neighborhood of $s$ such that for any $t\in U$ the oriented angle $\theta(\chi',\gamma'(t))$ admits a measure in $(-\delta,\delta)$. The dynamics $f_{\vert\gamma}$ is non-wandering and therefore there exists $N\in\N$ and $\tau\in U$ such that $\tau_N\in U$ where $f^N\circ\gamma(\tau)=\gamma(\tau_N)$.\\
\noindent From Lemma \ref{lemma tors=var} it holds $N\text{Torsion}_N(f,\gamma(\tau),\gamma'(\tau))=Var_{\gamma}(\gamma(\tau),\gamma(\tau_N))$. Observe that $Var_{\gamma}(\gamma(\tau),\gamma(\tau_N))\in(-2\delta,2\delta)$.\\
\noindent Consequently, since $\delta\in(0,\frac{\varepsilon}{4})$, we conclude that
$$
N\text{Torsion}_N(f,\gamma(\tau),\gamma'(\tau))\in\left(-\dfrac{\varepsilon}{2},\dfrac{\varepsilon}{2}\right).
$$
This contradicts Lemma \ref{lemma bound torsion} and we conclude.\\
\hfill\qed

\begin{remark}
	We have shown that the curve $\gamma$ is the graph of a function and it is always transverse to the vertical. Thus, since $\gamma$ is $\mathcal{C}^1$, we deduce that $\gamma$ is the graph of a $\mathcal{C}^1$ function.
\end{remark}
\begin{remark}
	In order to obtain the result of Theorem \ref{teo 2} we need information over the dynamics on the curve. Indeed, there exist non conservative positive twist maps that admit $\mathcal{C}^1$ essential $f$-invariant curves which are not graphs of function. See Proposition 15.3 in \cite{PLCB}.
\end{remark}

\bibliographystyle{alpha}
\bibliography{Bibliography}
\end{document}